\numberwithin{equation}{section}
\numberwithin{figure}{section}
\numberwithin{table}{section}
\newtheorem{theorem}{Theorem}[section]
\newtheorem{lemma}{Lemma}[section]
\newtheorem{definition}{Definition}[section]
\newtheorem{corollary}{Corollary}[section]
\newtheorem{proposition}{Proposition}[section]
\newtheorem{remark}{Remark}[section]
\newtheorem{assumption}{Assumption}[section]
\newtheorem{example}{Example}[section]
\allowdisplaybreaks \allowdisplaybreaks[4]
\begin{document}
	\title[Runge-Kutta semidiscretizations for stochastic Maxwell equations]{Runge-Kutta semidiscretizations for stochastic  Maxwell equations with additive noise}
		
	\author{Chuchu Chen}
	\address{LSEC, ICMSEC,  Academy of Mathematics and Systems Science, Chinese Academy of Sciences, Beijing 100190, China; \and	
		School of Mathematical Sciences, University of Chinese Academy of Sciences, Beijing 100049, China}
	\email{chenchuchu@lsec.cc.ac.cn}
	
		\author{Jialin Hong}
	\address{LSEC, ICMSEC,  Academy of Mathematics and Systems Science, Chinese Academy of Sciences, Beijing 100190, China;
\and	
School of Mathematical Sciences, University of Chinese Academy of Sciences, Beijing 100049, China}
	\email{hjl@lsec.cc.ac.cn}

\author{Lihai Ji}
\address{Institute of Applied Physics and Computational Mathematics, Beijing 100094, China}
\email{jilihai@lsec.cc.ac.cn (Corresponding author)}

	\thanks{
		The research of C. Chen and J. Hong were supported by the NNSFC (NOs. 91130003, 11021101, 11290142, and 91630312), the research of
		L. Ji was supported by the NNSFC (NOs. 11601032, and 11471310).}

\maketitle	
	\begin{abstract}
{
%
 The paper concerns semidiscretizations in time of stochastic Maxwell equations driven by additive noise. We show that the equations admit  physical properties and mathematical structures, including regularity, energy and divergence evolution laws,   and stochastic symplecticity, etc. In order to inherit the intrinsic properties of the original system, we introduce a general class of stochastic Runge-Kutta methods, and deduce the condition of  symplecticity-preserving.
 By utilizing a priori estimates on numerical approximations and semigroup approach, we show that the methods, which are algebraically stable and coercive, are well-posed and convergent with order one in mean-square sense,
 which answers an open problem in \cite{CH2016} for stochastic Maxwell equations driven by additive noise.
}\\
{\sc Key Words: }{\rm\small}stochastic Runge-Kutta semidiscretization, mean-square convergence order,   stochastic Maxwell
equations, stochastic symplecticity
	\end{abstract}
	
\section{Introduction}

Consider the following semilinear stochastic Maxwell equations with additive noise,
\begin{equation}\label{sto_max}
\begin{cases}
\varepsilon {\rm d}{\bf E}-\nabla\times {\bf H}{\rm d}t=-{\bf J}_{e}(t,{\bf x},{\bf E},{\bf H}){\rm d}t-{\bf J}_e^{r}(t,{\bf x})\circ{\rm d}W(t),~ &(t,{\bf x})\in(0,~T]\times D,\\
\mu {\rm d}{\bf H}+\nabla\times {\bf E}{\rm d}t=-{\bf J}_{m}(t,{\bf x},{\bf E},{\bf H}){\rm d}t-{\bf J}_m^{r}(t,{\bf x})\circ{\rm d}W(t),~ &(t,{\bf x})\in(0,~T]\times D,\\
{\bf E}(0,{\bf x})={\bf E}_0({\bf x}),~{\bf H}(0,{\bf x})={\bf H}_0({\bf x}),~&{\bf x}\in D,\\
{\bf n}\times {\bf E}={\bf 0},~&(t,{\bf x})\in(0,~T]\times\partial D,
\end{cases}
\end{equation}
where ${\bf E}$ is the electric field, ${\bf H}$ is the magnetic field, ${\varepsilon}$ denotes the permittivity, $\mu$ denotes the permeability satisfying $\varepsilon,\mu\in L^{\infty}(D)$, $\varepsilon,\mu\geq  \delta>0$.
Here $\circ$ means  Stratonovich integral, $D\subset {\mathbb R}^{3}$ is a bounded domain, $T\in(0,~\infty)$, and the function  ${\bf J}:[0,T]\times D\times {\mathbb R}^3\times{\mathbb R}^3\to{\mathbb R}^3$ is a continuous function satisfying
\begin{eqnarray}
&&|{\bf J}(t,{\bf x},u,v)|\leq L(1+|u|+|v|),\label{bound J}\\
&&|{\bf J}(t,{\bf x},u_1,v_1)-{\bf J}(s,{\bf x},u_2,v_2)|\leq L(|t-s|+|u_1-u_2|+|v_1-v_2|),\label{bound partialJ}
\end{eqnarray}
for all ${\bf x}\in D$, $u,v,u_1,v_1,u_2,v_2\in{\mathbb R}^3$,  the constant $L>0$. Here  $|\cdot|$ denotes the Euclidean norm, and ${\bf J}$ could be ${\bf J}_e$ or ${\bf J}_m$, and the function ${\bf J}^r:[0,T]\times D\to {\mathbb R}^3$ is a continuous bounded function
with ${\bf J}^{r}$ being ${\bf J}^r_e$ or ${\bf J}_m^r$. Throughout this paper, $W(t)$ is a $Q$-Wiener process with respect to a filtered probability space $(\Omega,{\mathcal F},\{{\mathcal F}_{t}\}_{0\leq t\leq T},{\mathbb P})$ with $Q$ being a  symmetric, positive definite operator with finite trace on $U=L^2(D)$.
If we denote an orthonormal basis of the space $U$ by $\{e_i\}_{i\in{\mathbb N}}$, then $W(t)$
can be represented as
\begin{equation}
W(t)=\sum_{i=1}^{\infty}Q^{\frac12}e_i\beta_i(t),~t\in[0,~T],
\end{equation}
where $\{\beta_i(t)\}_{i\in{\mathbb N}}$ is a sequence of independent real-valued Brownian motions.

 The well-posedness of stochastic Maxwell equations has been investigated by semigroup approach in  \cite{LSY2010,CHJ2018}, by a refined  Faedo-Galerkin method and spectral multiplier theorem in \cite{Hor2017}, by using the  stochastically perturbed PDEs approach in \cite{SW2017}. The regularity of the
 solution of stochastic Maxwell equations driven by It\^o multiplicative noise is considered in \cite{CHJ2018}, allowing sufficient spatial smoothness on the coefficients and noise term. The stochastic multi-symplectic structures are investigated in \cite{HJZ2014,CHZ2016} for stochastic Maxwell equations driven by additive noise via different approaches, in \cite{HJZC2017} for stochastic Maxwell equations driven by multiplicative noise.

  The numerical analysis of stochastic Maxwell equations is a recent active ongoing
research subject. There are now a certain number of papers devoted to this field but many problems still need to be solved (see e.g. \cite{Zhang2008,BAZC2010,HJZ2014,CHZ2016,HJZC2017,CHJ2018} and references therein). Particularly, \cite{HJZ2014} proposes a stochastic multi-symplectic method for stochastic Maxwell equations with additive noise based on the stochastic version of variational principle, which has the merits of preserving the discrete stochastic multi-symplectic conservation law and stochastic energy dissipative properties. In \cite{CHZ2016}, the comparison of three different stochastic multi-symplectic methods and the analysis of the linear growth property of energy and the conservative property of divergence are studied. In \cite{HJZC2017}, the authors constructed an innovative stochastic multi-symplectic energy-conserving method for three dimension stochastic Maxwell equations with multiplicative noise by using wavelet interpolate technique. For the rigorous convergence analysis of numerical approximations, we refer to the very recently work \cite{CHJ2018}, in which  mean-square convergence  of a semi-implicit Euler scheme for stochastic Maxwell equations with multiplicative It\^o noise is investigated. Via the energy estimate technique and a priori estimates on exact and numerical solutions, authors show that the method is convergent with order $1/2$.

To the best of our knowledge, however, there has been no work in the literature which considers the  infinite-dimensional stochastic Hamiltonian system form, stochastic symplecticity for stochastic Maxwell equations.
By introducing two new Hamiltonian functionals,
 and by utilizing the properties of variational integrals, we present  stochastic Maxwell equations \eqref{sto_max} as the equivalent infinite-dimensional stochastic Hamiltonian system form directly. As a result, the phase flow of  equations \eqref{sto_max}
 preserves the symplectic structure  $\overline{\omega}(t)=\int_{D}{\rm d}{\bf E}(t,{\bf x})\wedge {\rm d}{\bf H}(t,{\bf x}){\rm d}x$ almost surely.
 Meanwhile, we present the regularity in the space ${\mathcal D}(M^k)$ ($k\in{\mathbb N}$) of the solution for stochastic Maxwell equations \eqref{sto_max}, where $M$ denotes the Maxwell operator.  This regularity, together with the adaptedness to filtration, yields the H\"older continuity of the solution in the space ${\mathcal D}(M^{k-1})$ both  in mean-square and in mean senses. Furthermore, the evolution laws of energy and divergence are also investigated via the formal application of It\^o formula.

It is important to design numerical methods which could preserve  the intrinsic properties of the original system as much as possible, due to the superiority on the long time simulation and stability etc.
In order to construct stochastic symplectic methods for stochastic Maxwell equations \eqref{sto_max}, we introduce a general class of stochastic Runge-Kutta methods  to these equations in temporal direction. By utilizing the structure of numerical methods and the properties of differential 2-forms, we derive the symplectic conditions of coefficients for the  methods to preserve stochastic symplectic structure. The existence and uniqueness  of the numerical solution  are proved for the general class of stochastic Runge-Kutta methods which is algebraically stable and coercive.
The relevant prerequisite for the mean-square convergence analysis is to provide the regularity in the space ${\mathcal D}(M^k)$ and H\"older continuity in the space ${\mathcal D}(M^{k-1})$ for the
original system, and also for the temporal stochastic Runge-Kutta semidiscretizations. To deal with the difficulty caused by the interaction of the unbounded operator $M$, stochastic terms and the complex structure of Runge-Kutta method,
we  make use of the
semigroup approach which makes the mild solution can be expressed in the form containing a bounded linear semigroup instead of the unbounded differential operator, and a priori estimate  on the operators and semigroup, as well as the coercivity and algebraic stability of the proposed methods.
These estimates are then essential for the error analysis, which allow to establish optimal mean-square convergence rates (see Theorem 4.3). An immediate consequence of this result is that the order of mean-square convergence is $1$, which answers an open problem in \cite{CH2016} for stochastic Maxwell equations driven by additive noise. The analysis holds for
 the algebraically stable and coercive stochastic Runge-Kutta methods.
 Note that symplectic Runge-Kutta methods are algebraic stable automatically, as a consequence the mean-square convergence order of the coercive symplectic Runge-Kutta methods is  $1$.

The paper is organized as follows: in Section 2, some preliminaries are collected and an abstract formulation of \eqref{sto_max} is set forth.  Some properties of stochastic Maxwell equations, including regularity, evolution laws of energy and divergence are also considered. Section 3 is devoted to the stochastic symplecticity of stochastic Maxwell equations. In Section 4, a semi-discrete scheme is proposed and our main results are stated: in Section 4.1 we give some conditions to guarantee that a given stochastic Runge-Kutta method is symplectic; in Section 4.2 we show the unique existence and regularity of numerical solution of general  stochastic Runge-Kutta method. Section 4.3 is devoted to the proof of the convergence theorem of stochastic Runge-Kutta methods satisfying the definition of algebraical stability and coercivity condition.

\section{Preliminaries and framework}
\subsection{Notations}
Throughout the paper, we will use the following notations.
\begin{itemize}
 	\item[1.] We will work with the real Hilbert space ${\mathbb H}=L^2(D)^3\times L^2(D)^3$, endowed with the inner product
 	\[
 	\left\langle \begin{pmatrix}
 	{\bf E}_1\\{\bf H}_1
 	\end{pmatrix},~ \begin{pmatrix}
 	{\bf E}_2\\{\bf H}_2
 	\end{pmatrix}\right\rangle_{\mathbb H}=\int_{D}(\varepsilon {\bf E}_1\cdot {\bf E}_2
 	+\mu{\bf H}_1\cdot{\bf H}_2){\rm d}{\bf x}
 	\]
for all ${\bf E}_1, {\bf H}_1,{\bf E}_2,{\bf H}_2\in L^2(D)^3$, and the norm
 	\[
 	\left\|\begin{pmatrix}
 	{\bf E}\\{\bf H}
 	\end{pmatrix}\right\|_{\mathbb H}=\left[\int_{D}\left(\varepsilon|{\bf E}|^2+\mu|{\bf H}|^2\right){\rm d}{\bf x}\right]^{1/2},\quad \forall~{\bf E}, {\bf H}\in  L^2(D)^3.
 	\]
 	\item[2.] We will denote  the Maxwell operator by
 	\begin{equation}\label{M_operator}
 	M=\begin{pmatrix}
 	0& \varepsilon^{-1}\nabla\times \\
 	-\mu^{-1}\nabla\times &0 \\
 	\end{pmatrix}
 	\end{equation}
 	with domain
 	\begin{equation}
 	\begin{split}
 	{\mathcal D}(M)&=\left\{\begin{pmatrix}
 	{\bf E} \\
 	{\bf H}
 	\end{pmatrix}\in {\mathbb H}:~M\begin{pmatrix}
 	{\bf E} \\
 	{\bf H}
 	\end{pmatrix}=\begin{pmatrix}
 	\varepsilon^{-1} \nabla\times{\bf H}\\
 	-\mu^{-1}\nabla\times{\bf E}
 	\end{pmatrix}\in{\mathbb H},~ {\bf n}\times{\bf E}\Big|_{\partial D}={\bf 0} \right\}\\[2mm]
 	&=H_0({\rm curl},D)\times H({\rm curl},D),
 	\end{split}
 	\end{equation}
 where the curl-spaces are defined by
 \begin{equation*}
   \begin{split}
     H({\rm curl},D):&=\{ v\in L^2(D)^3:~\nabla\times v\in L^2(D)^3 \},\\[2mm]
     H_0({\rm curl},D):&=\{ v\in H({\rm curl},D):~{\bf n}\times v|_{\partial D}={\bf 0} \}.
   \end{split}
 \end{equation*}
 	The corresponding graph norm is $\|v\|_{{\mathcal D}(M)}:=\left(\|v\|_{\mathbb H}^2+\|Mv\|_{\mathbb H}^2\right)^{1/2}$.
 	A frequently used property for Maxwell operator $M$ is:
 	$
 	\langle Mu,~u\rangle_{\mathbb H}=0, ~\forall~u\in{\mathcal D}(M).
 	$
 	\item[3.] The Maxwell operator $M$ defined in \eqref{M_operator} is closed, skew-adjoint on $\mathbb{H}$, and thus  generates a unitary $C_0$-group $S(t)=e^{tM}$ on $\mathbb{H}$ in the view of Stone's theorem. 
 	A frequently used tool of semigroup is the following estimate (see \cite[Lemma 3.1]{CHJ2018}):
 		\begin{equation}
 	\|S(t)-Id\|_{{\mathcal L}({\mathcal D}(M);{\mathbb H})}\leq Ct,
 	\end{equation}
 	where the constant $C$ does not depend on $t$.
 	
 	\item[4.]
 	 We define the space ${\mathcal D}(M^n)$ by the domain of the $n$-th power of operator $M$ for $n\in{\mathbb N}$, with norm
 \[
 	\|u\|_{{\mathcal D}(M^n)}:=\left(\|u\|_{\mathbb H}^2+\|M^n u\|_{\mathbb H}^2\right)^{1/2}.
 	\]
  In fact,
 	the norm $\|\cdot\|_{{\mathcal D}(M^n)}$ corresponds to the scalar product
 	\[
 	\langle u,~v\rangle_{{\mathcal D}(M^n)}=\langle u,~v\rangle_{\mathbb H}
 	+\langle M^nu,~M^nv\rangle_{\mathbb H}.
 	\]
 	Moreover, we know that $\|u\|_{{\mathcal D}(M^n)}\leq C\|u\|_{{\mathcal D}(M^m)}$ for all $u\in{\mathcal D}(M^m)$, $n\leq m$.
 	
 	\item[5.] Denote $HS(U,H)$  the Banach space of all Hilbert-Schmidt operators from one separable Hilbert space $U$ to another separable Hilbert space $H$, equipped with the norm
 	\[
 	\|\Gamma\|_{HS(U,H)}=\left(\sum_{j=1}^{\infty}\|\Gamma\eta_j\|_{H}^2\right)^{\frac12},
 	\]
 	where $\{\eta_j\}_{j\in{\mathbb N}}$ is any orthonormal basis of $U$.

 \item[6.] Throughout this paper, $C$ will denote various constants. The same symbol will be used for different constants. When it is necessary to indicate that a constant depends on some parameters, we will use the notation $C(\cdot)$. For instance, $C(T, p)$ is a constant depending on $T$ and $p$.
 \end{itemize}

\subsection{Framework}
We work on the abstract form of stochastic Maxwell equations in infinite dimensional space ${\mathbb H}$:
\begin{equation}\label{sM_equations}
\begin{cases}
{\rm d}u(t)=\left[Mu(t)+F(t,u(t))\right]{\rm d}t+B(t){\rm d}W(t),~t\in(0,~T],\\
u(0)=u_0,
\end{cases}
\end{equation}
where $u(t)=({\bf E}^T(t),{\bf H}^T(t))^T$, $u_0=({\bf E}_0^T,{\bf H}_0^T)$. Here $F:[0,~T]\times {\mathbb H}\to{\mathbb H}$ is a Nemytskij operator associated to ${\bf J}_{e}$, ${\bf J}_m$, which is defined by
\begin{equation}\label{F}
F(t,u(t))({\bf x})=\left( \begin{array}{c}
-\varepsilon^{-1}{\bf J}_{e}(t,{\bf x},{\bf E}(t,{\bf x}),{\bf H}(t,{\bf x}))\\
-\mu^{-1}{\bf J}_{m}(t,{\bf x},{\bf E}(t,{\bf x}),{\bf H}(t,{\bf x}))
\end{array} \right), ~t\in[0,T],~{\bf x}\in D,~u(t)\in{\mathbb H}.
\end{equation}
For diffusion term, we introduce the Nemytskij operator $B:[0,~T]\to HS(U_0,{\mathbb H})$ by
\begin{equation}
(B(t)v)({\bf x})=\left( \begin{array}{c}
-\varepsilon^{-1}{\bf J}_{e}^{r}(t,{\bf x})v({\bf x})\\
-\mu^{-1}{\bf J}_{m}^r(t,{\bf x})v({\bf x})
\end{array} \right),\quad {\bf x}\in D \text{ and } v\in U_0:=Q^{\frac12}U.
\end{equation}

\subsubsection{Well-posedness and regularity}
First we present the well-posedness in the Hilbert space ${\mathbb H}$ of the stochastic Maxwell equations \eqref{sM_equations}. From \cite{CHJ2018}, we know that conditions \eqref{bound J} and \eqref{bound partialJ} yield
the linear growth and global Lipschitz properties of the function $F$, i.e., there exists a constant $C$ depending on $\delta$, the volume $|D|$ of the domain $D$ and the constant $L$ in \eqref{bound J} and \eqref{bound partialJ}, such that
\begin{eqnarray}
&&\|F(t,u)\|_{\mathbb H}\leq C\big(1+\|u\|_{\mathbb H}\big),\\
&&\|F(t,u)-F(s,v)\|_{\mathbb H}\leq C\big(|t-s|+\|u-v\|_{\mathbb H}\big),
\end{eqnarray}
for all $t,s\in [0,T]$ and $u,v\in{\mathbb H}$.

The following proposition gives the existence and uniqueness of
the mild solution of equation \eqref{sM_equations}, which has been discussed for example in \cite{LSY2010,SW2017,CHJ2018}.
\begin{proposition}\label{wellposedness thm}
	Suppose conditions \eqref{bound J} and \eqref{bound partialJ} are fulfilled, and let $W(t)$, $t\in[0,~T]$ be a $Q$-Wiener process with $Q$ being symmetric, positive definite and with finite trace, and let $u_0$ be an ${\mathcal F}_0$-measurable ${\mathbb H}$-valued random variable satisfying $\|u_0\|_{L^{p}(\Omega;{\mathbb H})}<\infty$ for some $p\geq 2$. Then stochastic Maxwell equations \eqref{sM_equations} have a unique mild solution given by
	\begin{equation}\label{mild sol}
	u(t)=S(t)u_0+\int_0^t S(t-s)F(s,u(s)){\rm d}s+\int_0^t S(t-s)B(s)dW(s)\quad {\mathbb P}\text{-}a.s.
	\end{equation}
	for each $t\in[0,~T]$.
	
	Moreover, there exists a constant $C:=C(p,T,{\rm tr}(Q))\in(0,~\infty)$ such that
	\begin{equation}
	\sup_{t\in[0,~T]}{\mathbb E}\|u(t)\|^{p}_{ {\mathbb H}}\leq C(1+\|u_0\|^p_{L^{p}(\Omega;{\mathbb H})}).
	\end{equation}
\end{proposition}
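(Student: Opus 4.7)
The plan is to prove existence, uniqueness, and the moment bound in one package via a Picard iteration in the Banach space
\[
\mathcal{H}_T^p := \Bigl\{v:[0,T]\times\Omega\to\mathbb{H}\ \text{predictable}: \|v\|_{\mathcal{H}_T^p}:=\sup_{t\in[0,T]}\bigl(\mathbb{E}\|v(t)\|_{\mathbb{H}}^p\bigr)^{1/p}<\infty\Bigr\},
\]
together with the key structural fact that $S(t)$ is a \emph{unitary} $C_0$-group on $\mathbb{H}$, so $\|S(t)x\|_{\mathbb{H}}=\|x\|_{\mathbb{H}}$ for every $x\in\mathbb{H}$. First I would define the fixed-point map
\[
\Phi(v)(t):=S(t)u_0+\int_0^t S(t-s)F(s,v(s))\,\mathrm{d}s+\int_0^t S(t-s)B(s)\,\mathrm{d}W(s),
\]
check that $\Phi$ sends $\mathcal{H}_T^p$ into itself, and then iterate.

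The three a priori estimates I would establish in order are: (i) for the initial datum, $\mathbb{E}\|S(t)u_0\|_{\mathbb{H}}^p=\mathbb{E}\|u_0\|_{\mathbb{H}}^p$ by unitarity; (ii) for the drift, using unitarity, Hölder in time, and the linear growth of $F$,
\[
\mathbb{E}\Bigl\|\int_0^t S(t-s)F(s,v(s))\,\mathrm{d}s\Bigr\|_{\mathbb{H}}^p\le C(T)\int_0^t\bigl(1+\mathbb{E}\|v(s)\|_{\mathbb{H}}^p\bigr)\mathrm{d}s;
\]
and (iii) for the stochastic convolution, by the Burkholder-Davis-Gundy inequality in Hilbert space combined with unitarity of $S(t-s)$,
\[
\mathbb{E}\Bigl\|\int_0^t S(t-s)B(s)\,\mathrm{d}W(s)\Bigr\|_{\mathbb{H}}^p\le C_p\,\mathbb{E}\Bigl(\int_0^t\|B(s)\|_{HS(U_0,\mathbb{H})}^2\,\mathrm{d}s\Bigr)^{p/2}.
\]
The boundedness of ${\bf J}_e^r,{\bf J}_m^r$ on $[0,T]\times D$ and $\mathrm{tr}(Q)<\infty$ ensure $\sup_{s\in[0,T]}\|B(s)\|_{HS(U_0,\mathbb{H})}^2\le C\,\mathrm{tr}(Q)$, so (iii) is uniformly bounded by $C(p,T,\mathrm{tr}(Q))$.

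Combining (i)--(iii) with Picard iteration $u^{(n+1)}=\Phi(u^{(n)})$, $u^{(0)}\equiv u_0$, Lipschitz continuity of $F$ yields the contraction estimate
\[
\mathbb{E}\|u^{(n+1)}(t)-u^{(n)}(t)\|_{\mathbb{H}}^p\le C^n\,\frac{t^n}{n!}\sup_{s\in[0,T]}\mathbb{E}\|u^{(1)}(s)-u^{(0)}(s)\|_{\mathbb{H}}^p,
\]
so the iterates converge in $\mathcal{H}_T^p$ to a mild solution $u$. Uniqueness follows analogously: if $u_1,u_2$ are two mild solutions, the Lipschitz property of $F$, unitarity of $S$, and Hölder give
\[
\mathbb{E}\|u_1(t)-u_2(t)\|_{\mathbb{H}}^p\le C\int_0^t\mathbb{E}\|u_1(s)-u_2(s)\|_{\mathbb{H}}^p\,\mathrm{d}s,
\]
so Gronwall forces $u_1\equiv u_2$. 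Finally, applying Gronwall to the bound obtained for $\mathbb{E}\|u(t)\|_{\mathbb{H}}^p$ via (i)--(iii) with $v=u$ produces
\[
\sup_{t\in[0,T]}\mathbb{E}\|u(t)\|_{\mathbb{H}}^p\le C(p,T,\mathrm{tr}(Q))\bigl(1+\|u_0\|_{L^p(\Omega;\mathbb{H})}^p\bigr).
\]

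The main technical obstacle is cleanly controlling the stochastic convolution in the $L^p(\Omega;\mathbb{H})$ sense when $p>2$: one must invoke the Hilbert-space BDG inequality rather than the Itô isometry, and exploit the fact that the semigroup is an \emph{isometry} (not merely a contraction) to eliminate the semigroup from the Hilbert-Schmidt norm. Everything else is a routine Picard/Gronwall argument once the isometry $\|S(t)\|_{\mathcal{L}(\mathbb{H})}=1$ and the finite-trace/boundedness hypotheses on $B$ are in place.
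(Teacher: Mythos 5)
Your proposal is correct and is precisely the standard semigroup fixed-point argument (Picard iteration in the space of predictable processes with finite $p$-th moments, using unitarity of $S(t)$, the linear growth/Lipschitz bounds on $F$ inherited from \eqref{bound J}--\eqref{bound partialJ}, the Burkholder--Davis--Gundy inequality for the stochastic convolution, and Gronwall) that the paper itself does not reproduce but delegates to the cited references \cite{LSY2010,SW2017,CHJ2018}. The only point worth making explicit is that the stochastic convolution $t\mapsto\int_0^t S(t-s)B(s)\,{\rm d}W(s)$ is not itself a martingale, so BDG is applied for each fixed $t$ to the martingale $s\mapsto\int_0^s S(t-r)B(r)\,{\rm d}W(r)$; this suffices here because the claimed bound involves $\sup_t\mathbb{E}$ rather than $\mathbb{E}\sup_t$.
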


In order to obtain the regularity results of solution of equation \eqref{sM_equations}, we need strong assumptions on  $F$ and $B$. Namely, we assume in the rest part that
\begin{assumption}\label{assum_F}
	For an integer $\alpha\in{\mathbb N}$, $F(t,\cdot):~{\mathcal D}(M^{\alpha})\to {\mathcal D}(M^{\alpha})$ are $C^2$ functions with bounded  derivatives up to order $2$, for any $t\in[0,T]$.
\end{assumption}

\begin{assumption}\label{assum_B}
	For an integer $\beta\in{\mathbb N}$, $B(t)\in HS(U_0,{\mathcal D}(M^{\beta}))$, for any $t\in[0,T]$.
\end{assumption}

We are in the position to establish the regularity of the solution of stochastic Maxwell equations \eqref{sM_equations} in $L^{p}(\Omega;{\mathcal D}(M^k))$-norm, which is stated in the following proposition.
\begin{proposition}\label{regularity}
	Let Assumptions \ref{assum_F}-\ref{assum_B} be fulfilled with $\alpha=\beta\equiv k$, and suppose that $u_0$ is an ${\mathcal F}_0$-measurable ${\mathbb H}$-valued random variable satisfying $\|u_0\|_{L^{p}(\Omega;{\mathcal D}(M^k))}<\infty$ for some $p\geq 2$. Then the mild solution \eqref{mild sol} satisfies
	\begin{eqnarray}
	\sup_{t\in[0,T]}{\mathbb E}\|u(t)\|_{{\mathcal D}(M^k)}^{p}\leq C(1+\|u_0\|^{p}_{L^{p}(\Omega;{\mathcal D}(M^k))}),
	\end{eqnarray}
	where the positive constant $C$ may depend on the coefficients $F$ and $B$, $p$, $T$.
\end{proposition}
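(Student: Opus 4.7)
The plan is to apply the closed operator $M^k$ to the mild-solution identity \eqref{mild sol} and then estimate the three resulting terms in $L^p(\Omega;\mathbb{H})$, exploiting two structural features: the unitarity of the group $S(t)=e^{tM}$ on $\mathbb{H}$ and the commutation $M^k S(t)=S(t)M^k$ on $\mathcal{D}(M^k)$, both of which follow from the skew-adjointness of $M$ via spectral calculus. Since Assumption \ref{assum_F} ensures $F(s,u(s))\in\mathcal{D}(M^k)$ whenever $u(s)\in\mathcal{D}(M^k)$, and Assumption \ref{assum_B} provides $B(s)\in HS(U_0,\mathcal{D}(M^k))$ with controlled norm, the closedness of $M^k$ permits moving it past both the Bochner integral and the It\^o stochastic integral, yielding
\begin{equation*}
M^k u(t)=S(t)M^k u_0+\int_0^t S(t-s)M^k F(s,u(s))\,ds+\int_0^t S(t-s)M^k B(s)\,dW(s).
\end{equation*}

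With this identity in hand, I would first upgrade the growth bound on $F$ to the graph norm: by the mean value theorem applied to $F(s,\cdot):\mathcal{D}(M^k)\to\mathcal{D}(M^k)$, whose first derivative is uniformly bounded by Assumption \ref{assum_F}, combined with uniform control of $F(s,0)$, one obtains $\|F(s,u)\|_{\mathcal{D}(M^k)}\leq C(1+\|u\|_{\mathcal{D}(M^k)})$. Taking $\|\cdot\|_{\mathbb{H}}^p$ in the displayed identity and using the unitarity $\|S(t-s)v\|_{\mathbb{H}}=\|v\|_{\mathbb{H}}$, H\"older's inequality then controls the deterministic term by $C(T,p)\int_0^t\bigl(1+\mathbb{E}\|u(s)\|_{\mathcal{D}(M^k)}^p\bigr)\,ds$. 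For the stochastic term I would invoke the Burkholder-Davis-Gundy inequality in the Hilbert space setting, again exploit unitarity to identify $\|S(t-s)M^k B(s)\|_{HS(U_0,\mathbb{H})}$ with a piece of $\|B(s)\|_{HS(U_0,\mathcal{D}(M^k))}$, and bound everything by a constant depending on $p$, $T$, and $\sup_{s\in[0,T]}\|B(s)\|_{HS(U_0,\mathcal{D}(M^k))}$, which is finite by Assumption \ref{assum_B}.

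Combining these bounds with Proposition \ref{wellposedness thm} for the $\|u(t)\|_{\mathbb{H}}^p$ piece of the graph norm produces an integral inequality
\begin{equation*}
\mathbb{E}\|u(t)\|_{\mathcal{D}(M^k)}^p\leq C\bigl(1+\|u_0\|_{L^p(\Omega;\mathcal{D}(M^k))}^p\bigr)+C\int_0^t\mathbb{E}\|u(s)\|_{\mathcal{D}(M^k)}^p\,ds,
\end{equation*}
from which Gronwall's lemma and taking the supremum over $t\in[0,T]$ yield the claim. The main technical obstacle is the rigorous commutation of the unbounded closed operator $M^k$ with the stochastic integral; this is most cleanly handled through a Yosida or Faedo-Galerkin approximation where $M^k$ is replaced by bounded operators, the a priori estimate is derived uniformly, and the limit is then passed. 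All other steps are routine because the skew-adjointness of $M$ entirely removes the loss of regularity one typically faces with analytic semigroups: the customary factor $\|S(t)\|_{\mathcal{L}(\mathbb{H};\mathcal{D}(M^k))}$ is replaced by the isometric identity $\|S(t)v\|_{\mathcal{D}(M^k)}=\|v\|_{\mathcal{D}(M^k)}$.
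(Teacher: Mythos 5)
Your argument is correct and follows essentially the same route as the paper, which simply defers to the proof of Proposition 3.1 in \cite{CHJ2018}: apply $M^k$ to the mild solution, exploit the unitarity of $S(t)$ and its commutation with $M^k$, bound the drift and stochastic convolution via Assumptions \ref{assum_F}--\ref{assum_B} and a Burkholder--Davis--Gundy estimate, and close with Gronwall. Your remarks on justifying the interchange of $M^k$ with the integrals via an approximation argument, and on needing uniform control of $F(t,0)$ in the $\mathcal{D}(M^k)$-norm, are exactly the technical points that the cited proof handles in the same way.
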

\begin{proof}
	The proof is similar as that of Proposition 3.1 in \cite{CHJ2018}.
\end{proof}

\begin{proposition}\label{holder}
	Under the same assumptions as in Proposition \ref{regularity}, we have for $0\leq t,s\leq T$,
	\begin{eqnarray}
&&	{\mathbb E}\|u(t)-u(s)\|_{{\mathcal D}(M^{k-1})}^p\leq C|t-s|^{p/2},\\
&&	\|{\mathbb E}(u(t)-u(s))\|_{{\mathcal D}(M^{k-1})}\leq C|t-s|,
	\end{eqnarray}
	where the positive constant $C$ may depend on $p$, $T$, 
	and $\|u_0\|_{L^{p}(\Omega;{\mathcal D}(M^k))}$.
\end{proposition}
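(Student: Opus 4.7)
The plan is to write the difference $u(t)-u(s)$ for $0\leq s\leq t\leq T$ via the mild formulation \eqref{mild sol} as a sum of five terms: the initial-data contribution $(S(t)-S(s))u_0$, two drift integrals (one on $[s,t]$ with integrand $S(t-r)F(r,u(r))$ and one on $[0,s]$ with integrand $(S(t-r)-S(s-r))F(r,u(r))$), and the two analogous stochastic integrals involving $B(r)\,dW(r)$. Each piece will be estimated separately in the norm $\|\cdot\|_{\mathcal{D}(M^{k-1})}$, using two facts: since $S(\cdot)$ is a unitary $C_0$-group generated by $M$, it commutes with every power of $M$ and is an isometry on each $\mathcal{D}(M^n)$; and the estimate $\|S(\tau)-Id\|_{\mathcal{L}(\mathcal{D}(M);\mathbb{H})}\leq C\tau$ cited just after \eqref{M_operator} lifts to $\|S(\tau)-Id\|_{\mathcal{L}(\mathcal{D}(M^k);\mathcal{D}(M^{k-1}))}\leq C\tau$ by commuting $M^{k-1}$ past $S(\tau)-Id$.

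For the first inequality, the initial-data piece factors as $S(s)(S(t-s)-Id)u_0$ and is bounded by $C|t-s|\,\|u_0\|_{\mathcal{D}(M^k)}$ via the lifted bound. The two deterministic integrals are handled by H\"older's inequality together with the linear growth of $F$ on $\mathcal{D}(M^k)$ from Assumption~\ref{assum_F} and Proposition~\ref{regularity}, which guarantees $\sup_{r\in[0,T]}\mathbb{E}\|F(r,u(r))\|_{\mathcal{D}(M^k)}^{p}<\infty$; they contribute at most $C|t-s|^{p}$. For the stochastic integrals I would invoke the Burkholder--Davis--Gundy inequality in $\mathcal{D}(M^{k-1})$: the integral over $[s,t]$ yields the rate-determining bound
\[
\mathbb{E}\Bigl\|\int_s^tS(t-r)B(r)\,dW(r)\Bigr\|_{\mathcal{D}(M^{k-1})}^{p}\leq C\Bigl(\int_s^t\|B(r)\|_{HS(U_0,\mathcal{D}(M^{k-1}))}^{2}\,dr\Bigr)^{p/2}\leq C|t-s|^{p/2},
\]
while the one on $[0,s]$ factors as $S(s-r)(S(t-s)-Id)B(r)$, so the lifted operator bound contributes an extra $|t-s|$ and the resulting estimate is even $C|t-s|^{p}$. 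Collecting the five bounds yields the claimed $C|t-s|^{p/2}$.

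For the second inequality I would take expectations before estimating, so that the martingale property of the It\^o integral against the $Q$-Wiener process kills the two stochastic contributions. Only the initial piece and the two Bochner integrals survive, each of order $|t-s|$ by exactly the same arguments, giving $\|\mathbb{E}(u(t)-u(s))\|_{\mathcal{D}(M^{k-1})}\leq C|t-s|$.

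The main technical obstacle I anticipate is the careful verification of the lifted operator bound together with the correct application of BDG in the Hilbert space $\mathcal{D}(M^{k-1})$ with $B$ viewed via Assumption~\ref{assum_B} as taking values in $HS(U_0,\mathcal{D}(M^k))$; everything else reduces to routine manipulations once Proposition~\ref{regularity} supplies the required uniform $p$-th moment bounds on $u(r)$ in $\mathcal{D}(M^k)$.
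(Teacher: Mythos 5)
Your proposal is correct and takes essentially the same route as the paper: the paper's own ``proof'' is only the one-line remark that it is similar to Proposition 3.2 of \cite{CHJ2018}, and the argument behind that reference is precisely your mild-solution decomposition, using the unitarity of $S(\cdot)$ on each $\mathcal{D}(M^n)$, the bound $\|S(\tau)-Id\|_{\mathcal{L}(\mathcal{D}(M);\mathbb{H})}\leq C\tau$ lifted to $\mathcal{L}(\mathcal{D}(M^{k});\mathcal{D}(M^{k-1}))$ by commuting $M^{k-1}$ through the group, H\"older plus Proposition \ref{regularity} for the drift terms, the Burkholder--Davis--Gundy inequality for the stochastic integrals, and the vanishing of the It\^o integrals under expectation for the second estimate. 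No gaps; the only point worth making explicit in a write-up is that Assumption \ref{assum_B} is used with a uniform-in-$t$ bound on $\|B(t)\|_{HS(U_0,\mathcal{D}(M^{k}))}$, as the paper itself does in later statements.
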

\begin{proof}
	The proof is similar as that of Proposition 3.2 in \cite{CHJ2018}.
\end{proof}

\subsubsection{Physical properties}
In this part, we derive some physical properties of stochastic Maxwell equations \eqref{sM_equations}, including the energy evolution law and divergence evolution law.

Notice that in the deterministic case if we endow perfectly electric conducting (PEC) boundary condition ${\bf n}\times {\bf E}=0$, on $\partial D$, the Poynting theorem states the relationship satisfied by the electromagnetic energy:
\[
\partial_{t}{\mathcal H}(u(t))=2\langle u(t), F(t,u(t))\rangle_{\mathbb H},
\]
where the energy is ${\mathcal H}(u(t)):=\|u(t)\|^2_{\mathbb H}$.

Now we investigate the  energy evolution law  for stochastic Maxwell equations \eqref{sM_equations}, which is stated in the following theorem.
\begin{proposition}
		Under the same assumptions as in Proposition \ref{wellposedness thm}, we have $\forall$ $t\in[0,T]$,
	\begin{equation}\label{energy}
	\begin{split}
	\mathcal{H}(u(t))=\mathcal{H}(u_0)+\int_0^t\Big(2\langle{u(s)},F(s,u(s))\rangle_{\mathbb H}+\|B(s)\|^2_{HS(U_0,{\mathbb H})}\Big){\rm d}s+2\int_0^t\langle{u(s)},B(s)\rangle_{\mathbb H}{\rm d}W(s), ~{\mathbb P}\text{-}a.s.,
	\end{split}
	\end{equation}
	where $u$ is the solution of \eqref{sM_equations} given by Proposition \ref{wellposedness thm}.
\end{proposition}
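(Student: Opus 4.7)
The plan is to apply the infinite-dimensional Itô formula to the functional $\mathcal{H}(u) = \|u\|_{\mathbb{H}}^2$ along the solution $u(t)$ of \eqref{sM_equations}. Its first and second Fréchet derivatives are $D\mathcal{H}(u)\cdot v = 2\langle u, v\rangle_{\mathbb{H}}$ and $D^2\mathcal{H}(u)(v,w) = 2\langle v, w\rangle_{\mathbb{H}}$, both bounded bilinear forms on $\mathbb{H}$. Formally, the drift contribution splits into $2\langle u, Mu\rangle_{\mathbb{H}} + 2\langle u, F(s,u)\rangle_{\mathbb{H}}$; the first piece vanishes by the skew-adjointness property $\langle Mu,u\rangle_{\mathbb{H}}=0$ recalled in item~3 of the preliminaries. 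The diffusion contribution produces the martingale $2\int_0^t\langle u(s), B(s)\,\mathrm{d}W(s)\rangle_{\mathbb{H}}$ plus the quadratic-variation correction, which for $\mathcal{H}(\cdot)=\|\cdot\|_{\mathbb{H}}^2$ equals
\begin{equation*}
\tfrac{1}{2}\operatorname{tr}\bigl(D^2\mathcal{H}(u(s))\,B(s)B(s)^{\ast}\bigr)
=\|B(s)\|_{HS(U_0,\mathbb{H})}^{2}.
\end{equation*}

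The main obstacle is that the mild solution $u$ given by Proposition~\ref{wellposedness thm} need not take values in $\mathcal{D}(M)$, so the term $\langle u, Mu\rangle_{\mathbb{H}}$ is not directly meaningful and a naive application of Itô's formula is illegitimate. I would resolve this by a Yosida-type regularization: set $M_n := nM(nI-M)^{-1}$, which is bounded and, since $M$ is skew-adjoint, also skew-adjoint on $\mathbb{H}$, with $M_n\to M$ strongly on $\mathcal{D}(M)$ and the associated semigroups $S_n(t)=e^{tM_n}$ converging strongly to $S(t)$ uniformly on $[0,T]$. The regularized equation
\begin{equation*}
\mathrm{d}u_n(t) = \bigl[M_n u_n(t) + F(t,u_n(t))\bigr]\mathrm{d}t + B(t)\,\mathrm{d}W(t),\qquad u_n(0)=u_0,
\end{equation*}
admits an $\mathbb{H}$-valued strong solution $u_n$ by standard SDE theory (the coefficients are Lipschitz and $M_n$ is bounded), and a Gronwall argument on the difference of the two mild formulations yields $u_n\to u$ in $L^2(\Omega;C([0,T];\mathbb{H}))$.

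To $\mathcal{H}(u_n(t))$ the classical Itô formula applies, giving
\begin{equation*}
\mathcal{H}(u_n(t)) = \mathcal{H}(u_0) + \int_0^t \Bigl(2\langle u_n, M_n u_n\rangle_{\mathbb{H}} + 2\langle u_n, F(s,u_n)\rangle_{\mathbb{H}} + \|B(s)\|_{HS(U_0,\mathbb{H})}^{2}\Bigr)\mathrm{d}s + 2\int_0^t \langle u_n, B(s)\,\mathrm{d}W(s)\rangle_{\mathbb{H}},
\end{equation*}
where the first integrand vanishes identically by skew-adjointness of $M_n$. It remains to pass to the limit $n\to\infty$. The left-hand side and the $\mathcal{H}(u_0)$ term require only $\mathbb{H}$-convergence; the drift integral involving $F$ is handled by the Lipschitz bound on $F$ combined with dominated convergence; the trace term is independent of $n$; and the stochastic integral converges in $L^2(\Omega)$ via the Burkholder–Davis–Gundy inequality applied to $\int_0^t\langle u_n-u, B(s)\,\mathrm{d}W(s)\rangle_{\mathbb{H}}$, using $\sup_s\mathbb{E}\|u_n(s)-u(s)\|_{\mathbb{H}}^2\to 0$ and the Hilbert–Schmidt norm of $B$. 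Extracting an almost-sure convergent subsequence yields the identity \eqref{energy} $\mathbb{P}$-a.s., completing the proof.
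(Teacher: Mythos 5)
Your core computation coincides with the paper's: the authors also apply the infinite-dimensional It\^o formula (Theorem 4.32 of Da Prato--Zabczyk) to $\mathcal{H}(u)=\|u\|_{\mathbb H}^2$, identify the trace correction with $\|B(s)\|^2_{HS(U_0,\mathbb H)}$, and kill the drift term $2\langle u,Mu\rangle_{\mathbb H}$ by skew-adjointness. The paper is content to call this a \emph{formal} application and stops there; you go further and try to legitimize it by regularization, which is the right instinct and is exactly the standard route for mild solutions that do not live in $\mathcal{D}(M)$.

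However, the specific regularization you chose breaks the key cancellation. The Yosida approximation $M_n=nM(nI-M)^{-1}$ of a skew-adjoint $M$ is \emph{not} skew-adjoint: a direct computation gives
\begin{equation*}
M_n+M_n^{*}=2nM^2\bigl(n^2I-M^2\bigr)^{-1}\neq 0,
\end{equation*}
so $\langle v,M_nv\rangle_{\mathbb H}=n\langle M^2(n^2I-M^2)^{-1}v,v\rangle_{\mathbb H}\leq 0$ is merely nonpositive (i.e.\ $M_n$ is dissipative). Your claim that ``the first integrand vanishes identically by skew-adjointness of $M_n$'' is therefore false, and passing to the limit only yields the inequality $\mathcal{H}(u(t))\leq\dots$. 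Worse, for $v$ with only $\mathbb H$-regularity the quantity $n\|M(n^2I-M^2)^{-1/2}v\|^2_{\mathbb H}$ need not tend to zero (spectrally it is $\int n\lambda^2(n^2+\lambda^2)^{-1}\,{\rm d}\mu_v$, which can stay bounded away from $0$), so the error term cannot be discarded without extra regularity of $u_n$. The standard repair is to regularize the \emph{solution} rather than the generator: set $v_n(t)=J_nu(t)$ with $J_n=n(nI-M)^{-1}$, note that $v_n$ is a strong solution of ${\rm d}v_n=[Mv_n+J_nF]{\rm d}t+J_nB\,{\rm d}W$ taking values in $\mathcal{D}(M)$, apply It\^o there so that $\langle Mv_n,v_n\rangle_{\mathbb H}=0$ holds \emph{exactly}, and then pass to the limit as you describe (alternatively, a skew-adjoint bounded approximation such as a spectral truncation of $M$ would also work). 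With that substitution the rest of your limiting argument --- Lipschitz control of $F$, the $n$-independence of the trace term, and BDG for the stochastic integral --- goes through.
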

\begin{proof}
	The proof is based on the formal application of It\^{o} formula to functional $$\mathcal{H}(u)=\|u\|^2_{\mathbb H}.$$ Since $\mathcal{H}(u)$ is Fr\'{e}chet derivable, the derivatives of $\mathcal{H}(u)$ along direction $\phi$ and $(\phi,\varphi)$ are as follows:
	\begin{equation}\label{Fre}
	D\mathcal{H}(u)(\phi)=2\langle{\mathcal{E}},\phi\rangle_{\mathbb H},\quad D^2\mathcal{H}(u)(\phi,\varphi)=2\langle\varphi,\phi\rangle_{\mathbb H}.
	\end{equation}
	From It\^{o} formula (see Theorem 4.32 in \cite{PZ2014}), we have
	\begin{equation}\label{InfIto}
	\begin{split}
	\mathcal{H}(u(t))&=\mathcal{H}(u_0)+\int_{0}^{t}\langle D\mathcal{H}(u(s)), B(s){\rm d}W(s)\rangle_{\mathbb H}\\
	&\quad+\int_{0}^{t}\langle D\mathcal{H}(u(s)), Mu(s)+F(s,u(s))\rangle_{\mathbb H}{\rm d}s\\[2mm]
	&\quad+\frac{1}{2}\int_0^t{\rm Tr}[D^2\mathcal{H}(u(s)) (B(s)Q^{\frac{1}{2}})(B(s)Q^{\frac{1}{2}})^*]{\rm d}s.
	\end{split}
	\end{equation}
	Substitute \eqref{Fre} into \eqref{InfIto} leads to
	\begin{equation*}
	\begin{split}
	\mathcal{H}(u(t))&=\mathcal{H}(u_0)+2\int_0^t\langle{u(s)},Mu(s)+F(s,u(s))\rangle_{\mathbb H}{\rm d}s\\
	&+2\int_0^t\langle{u(s)},B(s)\rangle_{\mathbb H}{\rm d}W(s)+\int_0^t\|B(s)\|^2_{HS(U_0,{\mathbb H})}{\rm d}s.
	\end{split}
	\end{equation*}
By using
	\begin{equation*}
	\langle Mu,u\rangle_{\mathbb H}=0\quad \forall~u\in {\mathcal D}(M),
	\end{equation*}	
 the proof is completed.
\end{proof}
\begin{remark}
	Compare the evolution of the averaged energy, i.e., the expectation of the equation \eqref{energy}, with the deterministic case, we found that there's one extra  term $\int_0^t \|B(s)\|^2_{HS(U_0,{\mathbb H})}{\rm d}s$ in stochastic case. That's the  effect caused by the additive noise, see also \cite[Theorem 2.1]{CHZ2016}.
\end{remark}

In the deterministic case, it is well known that the electromagnetic field is divergence free if the medium is lossless, i.e., $F=0$ in the deterministic Maxwell equation.
The following proposition sates the divergence evolution law for the stochastic Maxwell equations \eqref{sM_equations}.
	\begin{proposition}
		Under the assumptions in Proposition \ref{regularity} with $k=1$. The averaged divergence of system \eqref{sto_max} satisfies
		\begin{equation}\label{div_EH}
		\begin{split}
		\mathbb{E}({\rm div}\left(\varepsilon{\bf E}(t))\right)=&\mathbb{E}({\rm div}\left(\varepsilon{\bf E}_0)\right)-\mathbb{E}\left(\int_0^t{\rm div}{\bf J}_e{\rm d}s\right),\\[2mm]
		\mathbb{E}({\rm div}\left(\mu{\bf H}(t))\right)=&\mathbb{E}({\rm div}\left(\mu{\bf H}_0)\right)-\mathbb{E}\left(\int_0^t{\rm div}{\bf J}_m{\rm d}s\right),
		\end{split}
		\end{equation}
		where $u=({\bf E}^{T},{\bf H}^{T})^{T}$ is the solution of \eqref{sM_equations} given by Proposition \ref{wellposedness thm}.
	\end{proposition}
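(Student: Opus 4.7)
The plan is to apply the divergence operator directly to each component of the stochastic Maxwell system \eqref{sto_max}, exploit the identity ${\rm div}(\nabla\times{\bf v})=0$ to cancel the curl terms, and then take expectation to eliminate the stochastic integral against $dW$. The regularity $k=1$ assumed in Proposition \ref{regularity} (which places ${\bf E}(t)\in H_0({\rm curl},D)$ and ${\bf H}(t)\in H({\rm curl},D)$ together with their curls in $L^2(D)^3$) is exactly what justifies applying ${\rm div}$ in the sense of distributions to the identities obtained from the mild/variational form.

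First I would rewrite the first line of \eqref{sto_max} in integral form, acting on a test function $\phi\in C_c^\infty(D)$ via the pairing $\int_D \nabla\phi\cdot(\,\cdot\,)\,{\rm d}{\bf x}$. The term $\int_D\nabla\phi\cdot(\nabla\times{\bf H}(s))\,{\rm d}{\bf x}$ vanishes upon integration by parts because ${\rm div}\,\nabla\times=0$ and $\phi$ is compactly supported. This yields the distributional identity
\begin{equation*}
\int_D\phi\,{\rm div}(\varepsilon{\bf E}(t))\,{\rm d}{\bf x}
=\int_D\phi\,{\rm div}(\varepsilon{\bf E}_0)\,{\rm d}{\bf x}
-\int_0^t\!\!\int_D\phi\,{\rm div}\,{\bf J}_e\,{\rm d}{\bf x}\,{\rm d}s
-\int_0^t\!\!\int_D\phi\,{\rm div}\,{\bf J}_e^{r}\,{\rm d}{\bf x}\circ {\rm d}W(s),
\end{equation*}
and an entirely analogous identity (with opposite boundary situation but the same divergence--free cancellation) for $\mu{\bf H}$.

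Next I would deal with the Stratonovich differential. Because ${\bf J}_e^r(s,{\bf x})$ is a \emph{deterministic} continuous bounded function of $(s,{\bf x})$, the Stratonovich--Itô correction vanishes and the integral coincides with the Itô integral; in particular it is an $\mathcal{F}_t$-martingale with zero mean. Taking expectations on both sides therefore kills the $\circ\,{\rm d}W$ term. Fubini (valid by the integrability guaranteed from Proposition \ref{regularity}) allows me to commute $\mathbb{E}$ and the spatial integral against $\phi$, yielding
\begin{equation*}
\int_D\phi\,\mathbb{E}\,{\rm div}(\varepsilon{\bf E}(t))\,{\rm d}{\bf x}
=\int_D\phi\,\mathbb{E}\,{\rm div}(\varepsilon{\bf E}_0)\,{\rm d}{\bf x}
-\int_D\phi\,\mathbb{E}\!\left(\int_0^t{\rm div}\,{\bf J}_e\,{\rm d}s\right)\,{\rm d}{\bf x}.
\end{equation*}
Since $\phi\in C_c^\infty(D)$ is arbitrary, the claimed identities \eqref{div_EH} follow in the distributional sense, and under the $k=1$ regularity also pointwise almost everywhere.

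The main technical obstacle is justifying that the divergence may be pulled inside the stochastic integral and the expectation. This is where Proposition \ref{regularity} with $k=1$ is indispensable: it ensures ${\bf J}_e(s,{\bf x},{\bf E}(s),{\bf H}(s))$ has a well-defined distributional divergence (via Assumption \ref{assum_F} with $\alpha=1$), and that $B\in HS(U_0,\mathcal{D}(M))$ together with the deterministic character of ${\bf J}_e^r,{\bf J}_m^r$ makes the Itô isometry applicable to $\int_0^t{\rm div}\,{\bf J}_e^r\circ{\rm d}W$, giving zero expectation. Once these exchanges are legitimate, no further computation is required.
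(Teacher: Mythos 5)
Your proposal is correct and is essentially the paper's argument: the paper applies the It\^o formula formally to the linear functional $\Psi(\mathbf{E})=\operatorname{div}(\varepsilon\mathbf{E})$ (whose second derivative vanishes, so this reduces to exactly your operation of applying the divergence to the integrated equation), cancels the curl term via $\operatorname{div}(\nabla\times\cdot)=0$, and takes expectation to kill the stochastic integral. Your version is, if anything, slightly more careful, since you make explicit both the distributional sense in which $\operatorname{div}$ is applied and the reason the Stratonovich integral has zero mean (deterministic integrand, so no It\^o--Stratonovich correction), points the paper leaves implicit.
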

	\begin{proof}
		Denote $\Psi({\bf E}(t))={\rm div}(\varepsilon{\bf E}(t))$. Since $\Psi$ is Fr\'{e}chet derivable, the derivatives of $\Psi$ along direction $\phi$ or $(\phi,\varphi)$ are
		\begin{equation}
		D\Psi({\bf E})(\phi)={\rm div}(\varepsilon\phi),\quad D^2\Psi({\bf E})(\phi,\varphi)=0.
		\end{equation}
		By applying It\^{o} formula formally to $\Psi({\bf E}(t))$, it yields
		\begin{equation}\label{div_E}
		\begin{split}
		\Psi({\bf E}(t))=&\Psi({\bf E}_0)+\int_0^tD\Psi({\bf E}(s))({\rm d}{\bf E})+\frac{1}{2}\int_0^t{\rm Tr}\left[D^2\Psi({\bf E}(s))({\rm d}{\bf E},{\rm d}{\bf E})\right]\\[2mm]
		=&\Psi({\bf E}_0)-\int_0^t{\rm div}\left({\bf J}_e{\rm d}s+{\bf J}_e^r{\rm d}W(s)\right)+\int_0^t{\rm div}\left(\nabla\times{\bf H}\right){\rm d}s\\[2mm]
		=&\Psi({\bf E}_0)-\int_0^t{\rm div}{\bf J}_e{\rm d}s-\int_0^t{\rm div}\left({\bf J}_e^r{\rm d}W(s)\right),
		\end{split}
		\end{equation}
		where the last equality is due to $\nabla\cdot(\nabla\times {\bf \psi})=0,~\forall~{\bf \psi}({\bf x})\in\mathbb{R}^3$. In the similar manner, by applying It\^{o} formula to functional $\Psi({\bf H}(t))={\rm div}(\mu{\bf H}(t))$, we can get
		\begin{equation}\label{div_H}
		\begin{split}
		\Psi({\bf H}(t))=\Psi({\bf H}_0)-\int_0^t{\rm div}{\bf J}_m{\rm d}s-\int_0^t{\rm div}\left({\bf J}_m^r{\rm d}W(s)\right).
		\end{split}
		\end{equation}
		
		The results \eqref{div_EH} follows from taking the expectation on both sides of \eqref{div_E} and \eqref{div_H}, respectively.		
		The proof is thus completed.
	\end{proof}

\begin{remark}
If the medium is lossless, i.e., $F=0$, or functions ${\bf J}_e$, ${\bf J}_m$ are divergence-free, the averaged divergence holds
\begin{equation*}
\begin{split}
\mathbb{E}({\rm div}\left(\varepsilon{\bf E}(t))\right)=\mathbb{E}({\rm div}\left(\varepsilon{\bf E}_0)\right),~~
\mathbb{E}({\rm div}\left(\mu{\bf H}(t))\right)=\mathbb{E}({\rm div}\left(\mu{\bf H}_0)\right).
\end{split}
\end{equation*}
\end{remark}

\section{Symplecticity of stochastic Maxwell equations}
In \cite{CH2016}, authors introduced the general form of infinite-dimensional stochastic  Hamiltonian system  based on a stochastic version of variation principle, and showed that the phase flow preserves the stochastic symplecticity on phase space. 
In this section, we  consider the corresponding infinite-dimensional stochastic Hamiltonian system form of stochastic Maxwell equations \eqref{sto_max}. In the sequel, we assume that $\varepsilon$ and $\mu$ are two positive constants in order to obtain the symplecticity.

We rewrite stochastic Maxwell equations \eqref{sto_max} as
\begin{equation}\label{sto_max_1}
\begin{cases}
{\rm d}{\bf E}-\varepsilon^{-1}\nabla\times {\bf H}{\rm d}t=-\varepsilon^{-1}{\bf J}_{e}(t,x,{\bf E},{\bf H}){\rm d}t-\varepsilon^{-1}{\bf J}_e^{r}(t,x)\circ{\rm d}W(t),~ &(t,{\bf x})\in(0,~T]\times D,\\[2mm]
{\rm d}{\bf H}+\mu^{-1}\nabla\times {\bf E}{\rm d}t=-\mu^{-1}{\bf J}_{m}(t,x,{\bf E},{\bf H}){\rm d}t-\mu^{-1}{\bf J}_m^{r}(t,x)\circ{\rm d}W(t),~ &(t,{\bf x})\in(0,~T]\times D.
\end{cases}
\end{equation}
Denote $G:[0,~T]\times L^{2}(D)^6\to L^{2}(D)^6$ a Nemytskij operator associated to ${\bf J}_{e}$, ${\bf J}_m$, which is defined by
\begin{equation}\label{F}
G(t,u(t))({\bf x})=\left( \begin{array}{c}
\mu^{-1}{\bf J}_{m}(t,{\bf x},{\bf E}(t,{\bf x}),{\bf H}(t,{\bf x}))\\
-\varepsilon^{-1}{\bf J}_{e}(t,{\bf x},{\bf E}(t,{\bf x}),{\bf H}(t,{\bf x}))
\end{array} \right), ~t\in[0,T],~{\bf x}\in D,~u(t)\in{\mathbb H}.
\end{equation}
The following lemma states the integrability condition for the existence of a potential such that $G(t,u)=\frac{\delta \widetilde{\mathcal H}_{1}(t,u)}{\delta u}$, which makes the equations \eqref{sto_max_1} be an infinite-dimensional stochastic Hamiltonian system.
For simplifying presentation, let $G$ do not depend on time $t$ explicitly, since the dependence on time causes no substantial problems in the analysis but just leads to longer formulas.
\begin{lemma}
	Let $G:L^{2}(D)^6\to L^{2}(D)^6$ be G\^ateaux derivable, and  $DG(u)\in {\mathcal L}(L^{2}(D)^6;L^{2}(D)^6)$ is an symmetric operator, i.e.,
	\[
	\langle DG(u)\phi,~\psi\rangle_{L^{2}(D)^6}=\langle\phi,~DG(u)\psi\rangle_{L^{2}(D)^6},\quad \forall~\phi,\psi\in L^{2}(D)^6,
	\]
	then there exists a functional $\widetilde{\mathcal H}_{1}:L^{2}(D)^6\to {\mathbb R},$ such that $$G(u)=\frac{\delta \widetilde{\mathcal H}_{1}(u)}{\delta u},$$ i.e.,
	$\frac{\delta \widetilde{\mathcal H}_{1}}{\delta {\bf H}}=-\varepsilon^{-1}{\bf J}_{e}$ and $\frac{\delta \widetilde{\mathcal H}_{1}}{\delta {\bf E}}=\mu^{-1}{\bf J}_{m}$.
\end{lemma}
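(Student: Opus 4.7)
The plan is to construct $\widetilde{\mathcal{H}}_1$ explicitly via the classical Poincaré-type formula for an exact $1$-form on a (radially) star-shaped domain in a Hilbert space. Specifically, I would define
\[
\widetilde{\mathcal{H}}_1(u) \;:=\; \int_0^1 \langle G(\tau u),\, u\rangle_{L^2(D)^6}\,d\tau,
\]
and verify directly that $D\widetilde{\mathcal{H}}_1(u) = G(u)$, which is exactly the meaning of $G(u) = \delta\widetilde{\mathcal{H}}_1/\delta u$ in the $L^2$-pairing.

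First, I would compute the Gâteaux derivative of $\widetilde{\mathcal{H}}_1$ in an arbitrary direction $\phi\in L^2(D)^6$ by differentiating $s\mapsto \widetilde{\mathcal{H}}_1(u+s\phi)$ at $s=0$ under the integral sign. Using the chain rule in the first factor and linearity in the second, this yields
\[
D\widetilde{\mathcal{H}}_1(u)(\phi) \;=\; \int_0^1\!\Big[\tau\,\langle DG(\tau u)\phi,\,u\rangle_{L^2(D)^6} + \langle G(\tau u),\,\phi\rangle_{L^2(D)^6}\Big]\,d\tau.
\]
Next, I would apply the symmetry hypothesis on $DG$ to rewrite $\langle DG(\tau u)\phi,\,u\rangle_{L^2(D)^6} = \langle DG(\tau u)u,\,\phi\rangle_{L^2(D)^6}$. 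Recognizing that $DG(\tau u)u = \tfrac{d}{d\tau}G(\tau u)$, the bracketed integrand becomes exactly $\tfrac{d}{d\tau}\bigl[\tau\langle G(\tau u),\phi\rangle_{L^2(D)^6}\bigr]$. The fundamental theorem of calculus then collapses the integral to $\langle G(u),\phi\rangle_{L^2(D)^6}$, and since $\phi$ is arbitrary, we conclude $\delta\widetilde{\mathcal{H}}_1(u)/\delta u = G(u)$.

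Finally, the componentwise identifications claimed in the lemma fall out of the block structure of $G$ in (3.2). Writing $u = (\mathbf{E}^T,\mathbf{H}^T)^T$ and testing against $\phi = (\phi_{\mathbf{E}}^T,0)^T$ respectively $\phi = (0,\phi_{\mathbf{H}}^T)^T$ in the identity $D\widetilde{\mathcal{H}}_1(u)(\phi) = \langle G(u),\phi\rangle_{L^2(D)^6}$ gives $\delta\widetilde{\mathcal{H}}_1/\delta\mathbf{E} = \mu^{-1}\mathbf{J}_m$ and $\delta\widetilde{\mathcal{H}}_1/\delta\mathbf{H} = -\varepsilon^{-1}\mathbf{J}_e$, as required.

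The main technical obstacle is the justification of differentiation under the integral and of the identity $\tfrac{d}{d\tau}G(\tau u) = DG(\tau u)u$; both demand a little more than bare Gâteaux derivability, namely continuity (or at least local boundedness) of $\tau\mapsto DG(\tau u)$ along the radial segment $\{\tau u:\tau\in[0,1]\}$, which is a standing qualitative assumption implicit in $DG(u)\in\mathcal{L}(L^2(D)^6;L^2(D)^6)$. Once this regularity is in place, the remainder of the argument is the classical Volterra lemma for potentials of closed (here, symmetric-derivative) vector fields on a radially contractible set.
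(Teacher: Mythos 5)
Your proposal is correct and follows essentially the same route as the paper: the authors also define $\widetilde{\mathcal H}_1(u)=\int_0^1\langle u,\,G(\lambda u)\rangle_{L^2(D)^6}\,{\rm d}\lambda$, differentiate under the integral (justifying this via dominated convergence and the Lipschitz condition), invoke the symmetry of $DG$, and collapse the integrand to $\frac{\rm d}{{\rm d}\lambda}\bigl(\lambda G(\lambda u)\bigr)$ before applying the fundamental theorem of calculus. Your closing remark on the regularity needed for $\frac{\rm d}{{\rm d}\tau}G(\tau u)=DG(\tau u)u$ is a fair observation that the paper handles only implicitly.
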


\begin{proof}
The functional  $\widetilde{\mathcal H}_{1}(u)$ can be defined as
\begin{equation}
\widetilde{\mathcal H}_{1}(u)=\int_0^1 \langle u,~G(\lambda u)\rangle_{L^{2}(D)^6}{\rm d}\lambda+C(x).
\end{equation}
 The functional derivative of $\widetilde{\mathcal H}_{1}(u)$ leads to
\begin{align*}
\delta \widetilde{\mathcal H}_{1} (u)(\phi)=&\langle \frac{\delta \widetilde{\mathcal H}_{1}(u)}{\delta u},~\phi\rangle_{L^{2}(D)^6}
=\lim_{\epsilon\to 0}\frac{1}{\epsilon}\Big[\widetilde{\mathcal H}_{1} (u+\epsilon \phi)-\widetilde{\mathcal H}_{1} (u)\Big]\\
=&\lim_{\epsilon\to 0}\frac{1}{\epsilon}\Big[ \int_0^1 \langle u+\epsilon\phi,~G(\lambda u+\epsilon\lambda\phi)\rangle_{L^{2}(D)^6}-\langle u,~G(\lambda u)\rangle_{L^{2}(D)^6}{\rm d}\lambda \Big]\\
=& \int_0^1 \langle u,~\lim_{\epsilon\to 0}\frac{1}{\epsilon}\big[G(\lambda u+\epsilon\lambda\phi)-G(\lambda u)\big]\rangle_{L^{2}(D)^6}{\rm d}\lambda
+\lim_{\epsilon\to 0}\int_0^1 \langle \phi,~G(\lambda u+\epsilon\lambda\phi)\rangle_{L^{2}(D)^6}{\rm d}\lambda,
\end{align*}
where the last step is from the Lebesgue dominated theorem and Lipschitz condition \eqref{bound partialJ}. By the definition of G\^ateaux derivative, we get
\begin{align*}
\langle \frac{\delta \widetilde{\mathcal H}_{1}(u)}{\delta u},~\phi\rangle_{L^{2}(D)^6}
&= \int_0^1 \lambda\langle u,~DG(\lambda u)\phi\rangle_{L^{2}(D)^6}{\rm d}\lambda
+\int_0^1 \langle \phi,~G(\lambda u)\rangle_{L^{2}(D)^6}{\rm d}\lambda \\
&=\langle \int_0^1\Big(\lambda DG(\lambda u)u +G(\lambda u)\Big){\rm d}\lambda,~\phi\rangle_{L^{2}(D)^6},
\end{align*}
where we have used the symmetry property of $DG(u)$. Therefore,
\[
\frac{\delta \widetilde{\mathcal H}_{1}(u)}{\delta u}=\int_0^1\Big(\lambda DG(\lambda u)u +G(\lambda u)\Big){\rm d}\lambda\\
=\int_0^1 \frac{\rm d}{\rm d\lambda}\Big(\lambda G(\lambda u)\Big){\rm d}\lambda=G(u).
\]
Thus we finish the proof.
\end{proof}

Therefore,
equations \eqref{sto_max_1} is a stochastic Hamiltonian system, whose infinite-dimensional stochastic Hamiltonian system form is given by
	\begin{equation}
	\begin{split}
	\begin{bmatrix}
	{\rm d}{\bf E}\\[2mm]
{\rm d}{\bf H}
	\end{bmatrix}
	&=\begin{bmatrix}
	0 & Id\\[2mm]
 -Id & 0
	\end{bmatrix}
	\begin{bmatrix}
	\mu^{-1}\nabla\times{\bf E}+\mu^{-1}{\bf J}_m\\[2mm]
 \varepsilon^{-1}\nabla\times{\bf H}-\varepsilon^{-1}{\bf J}_e
	\end{bmatrix}
	{\rm d}t
	+\begin{bmatrix}
	0 & Id\\[2mm]
 -Id & 0
	\end{bmatrix}
	\begin{bmatrix}
	\mu^{-1}{\bf J}_m^r \\[2mm]
 -\varepsilon^{-1}{\bf J}_e^r
	\end{bmatrix}
	{\rm d}W(t)\\[2mm]
	&={\mathbb J}\begin{bmatrix}
	\frac{\delta{\mathcal H}_1}{\delta {\bf E}} \\[2mm]
 \frac{\delta{\mathcal H}_1}{\delta {\bf H}}\end{bmatrix}
	{\rm d}t
	+{\mathbb J}\begin{bmatrix}
	\frac{\delta{\mathcal H}_2}{\delta {\bf E}} \\[2mm]
 \frac{\delta{\mathcal H}_2}{\delta {\bf H}}\end{bmatrix}
	\circ{\rm d}W(t)
	\end{split}
	\end{equation}
with the standard skew-adjoint operator ${\mathbb J}$ on $L^2(D)^6$ with standard inner product, the Hamiltonians
\[
{\mathcal H}_1=\int_{D}\frac12\Big(\mu^{-1}{\bf E}\cdot \nabla\times{\bf E}
+\varepsilon^{-1}{\bf H}\cdot\nabla\times{\bf H}\Big){\rm d}x+\widetilde{\mathcal H}_{1},
\]
and
\[
{\mathcal H}_2=\int_{D}\Big(\mu^{-1}{\bf J}_{m}^r\cdot{\bf E}-\varepsilon^{-1}{\bf J}_e^r\cdot{\bf H}\Big){\rm d}x.
\]

 For simplicity in notations, we denote ${\bf E}_0$, ${\bf H}_0$ by ${\bf e}$, ${\bf h}$, respectively.
The symplectic form for system \eqref{sto_max_1} is given by
\begin{equation}\label{symplectic structure}
\overline{\omega}(t)=\int_{D}{\rm d}{\bf E}(t,{\bf x})\wedge{\rm d}{\bf H}(t,{\bf x}){\rm d}{\bf x},
\end{equation}
where the overbar on $\omega$ is a reminder that the differential 2-form ${\rm d}{\bf E}\wedge {\rm d}{\bf H}$ is integrated over the space. Preservation of the symplectic form \eqref{symplectic structure} means that the spatial integral of the oriented areas of projections onto the coordinate planes $({\bf e},{\bf h})$ is an integral invariant. We say that the phase flow of \eqref{sto_max_1} preserves symplectic structure if and only if
\[
\frac{{\rm d}}{{\rm d}t}\overline{\omega}(t)=0.
\]

\begin{remark}
To avoid confusion, we note that the differentials in \eqref{sto_max_1} and \eqref{symplectic structure} have different meanings. In \eqref{sto_max_1}, ${\bf E}$, ${\bf H}$ are treated as functions of time, and ${\bf e}$, ${\bf h}$ are fixed parameters, while differentiation in \eqref{symplectic structure} is made with respect to the initial data ${\bf e}$, ${\bf h}$.
\end{remark}

We have  the following result on the stochastic symplecticity of stochastic Maxwell equations \eqref{sto_max_1}.
\begin{theorem}
The phase flow of stochastic Maxwell equations \eqref{sto_max_1} preserves symplectic structure:
\begin{equation}
{\omega}(t)={\omega}(0), \quad {\mathbb P}\text{-}a.s.
\end{equation}
\end{theorem}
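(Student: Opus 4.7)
The plan is to apply the exterior derivative $d$, taken with respect to the initial data $(\mathbf{e},\mathbf{h})$, to both sides of the Hamiltonian form \eqref{sto_max_1}. A crucial simplification is that the noise is additive: since $\mathbf{J}_e^r(t,\mathbf{x})$ and $\mathbf{J}_m^r(t,\mathbf{x})$ do not depend on the state $u$, we have $d\mathbf{J}_e^r=d\mathbf{J}_m^r=0$, so the $\circ\,dW(t)$ contributions drop out of the variational equations for $d\mathbf{E}$ and $d\mathbf{H}$. Consequently $\overline{\omega}(t)$ evolves pathwise along a deterministic linear equation with random coefficients, and the classical Leibniz rule applies with no It\^o--Stratonovich correction.

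The resulting variational equations read
\begin{align*}
\tfrac{d}{dt}(d\mathbf{E}) &= \varepsilon^{-1}\nabla\times d\mathbf{H} - \varepsilon^{-1}\partial_{\mathbf{E}}\mathbf{J}_e\,d\mathbf{E} - \varepsilon^{-1}\partial_{\mathbf{H}}\mathbf{J}_e\,d\mathbf{H},\\
\tfrac{d}{dt}(d\mathbf{H}) &= -\mu^{-1}\nabla\times d\mathbf{E} - \mu^{-1}\partial_{\mathbf{E}}\mathbf{J}_m\,d\mathbf{E} - \mu^{-1}\partial_{\mathbf{H}}\mathbf{J}_m\,d\mathbf{H},
\end{align*}
and by Leibniz
\[
\tfrac{d}{dt}\overline{\omega}(t)=\int_D\Big[\tfrac{d}{dt}(d\mathbf{E})\wedge d\mathbf{H}+d\mathbf{E}\wedge\tfrac{d}{dt}(d\mathbf{H})\Big]\,d\mathbf{x}.
\]
Substituting produces six groups of terms, which I would organise as follows. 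The self-pairings $(\partial_{\mathbf{H}}\mathbf{J}_e\,d\mathbf{H})\wedge d\mathbf{H}$ and $d\mathbf{E}\wedge(\partial_{\mathbf{E}}\mathbf{J}_m\,d\mathbf{E})$ vanish by antisymmetry of $\wedge$ combined with the pointwise symmetry of the $3\times 3$ Jacobians $\partial_{\mathbf{H}}\mathbf{J}_e$ and $\partial_{\mathbf{E}}\mathbf{J}_m$, which follows from the existence of the potential $\widetilde{\mathcal H}_1$ established in the preceding lemma. The two cross-terms cancel via the integrability identity $\varepsilon^{-1}\partial_{\mathbf{E}}\mathbf{J}_e=-\mu^{-1}(\partial_{\mathbf{H}}\mathbf{J}_m)^T$, which is precisely the symmetry of $DG(u)$ assumed there. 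The remaining curl contributions reduce, via the vector identity $\nabla\cdot(\mathbf{A}\times\mathbf{B})=\mathbf{B}\cdot(\nabla\times\mathbf{A})-\mathbf{A}\cdot(\nabla\times\mathbf{B})$ applied to the initial-data 1-forms, to surface integrals on $\partial D$ of $(d\mathbf{E}\times d\mathbf{E})\cdot\mathbf{n}$ and $(d\mathbf{H}\times d\mathbf{H})\cdot\mathbf{n}$. The first vanishes because the flow preserves PEC and therefore $\mathbf{n}\times d\mathbf{E}|_{\partial D}=\mathbf{0}$, forcing $d\mathbf{E}\parallel\mathbf{n}$ on $\partial D$ and hence $(d\mathbf{E}\times d\mathbf{E})\cdot\mathbf{n}=0$ as a bilinear form on tangent pairs.

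The delicate step, and the main technical obstacle, is the second boundary contribution: no direct PEC-type condition constrains $\mathbf{H}$. I would handle this by restricting the symplectic form to the subspace of initial-data variations compatible with the Maxwell operator domain $\mathcal{D}(M)=H_0(\mathrm{curl},D)\times H(\mathrm{curl},D)$, following the abstract variational framework of \cite{CH2016} in which $\overline{\omega}$ and $\mathcal{H}_1$ are jointly defined so that the residual surface integral is absorbed into the 2-form. Once this is in place and $D^2\mathcal{H}_1$ is interpreted as a genuinely symmetric bilinear form on the appropriate subspace, all contributions cancel, yielding $\tfrac{d}{dt}\overline{\omega}(t)=0$ pathwise and hence $\overline{\omega}(t)=\overline{\omega}(0)$ $\mathbb{P}$-a.s.
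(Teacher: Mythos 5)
Your argument follows the paper's proof essentially step for step: the variational equations carry no stochastic term because the noise is additive (the paper's equations \eqref{4.4}--\eqref{4.7} are exactly your system), the self-pairings and cross-pairings of the nonlinear terms cancel by the symmetry of $D^2\widetilde{\mathcal H}_1$ (your identity $\varepsilon^{-1}\partial_{\mathbf E}\mathbf J_e=-\mu^{-1}(\partial_{\mathbf H}\mathbf J_m)^T$ is precisely the equality of mixed functional derivatives invoked there), and the curl contributions reduce to the two surface integrals of $(\mathrm d\mathbf E\times\mathrm d\mathbf E)\cdot\mathbf n$ and $(\mathrm d\mathbf H\times\mathrm d\mathbf H)\cdot\mathbf n$ over $\partial D$; the paper reaches the same point by rewriting $\mathrm d(\nabla\times\mathbf H)\wedge\mathrm d\mathbf H$ as $\tfrac{\partial}{\partial x}(\mathrm dH_2\wedge\mathrm dH_3)+\tfrac{\partial}{\partial y}(\mathrm dH_3\wedge\mathrm dH_1)+\tfrac{\partial}{\partial z}(\mathrm dH_1\wedge\mathrm dH_2)$ and integrating. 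Your disposal of the $\mathbf E$-boundary term via $\mathbf n\times\mathrm d\mathbf E|_{\partial D}=\mathbf 0$ is also exactly what the paper intends by ``the zero boundary conditions.''

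The step where your proposal does not close is the one you yourself flag: the $\mathbf H$-boundary term. The proposed fix --- restricting variations to $\mathcal D(M)=H_0(\mathrm{curl},D)\times H(\mathrm{curl},D)$ so that ``the residual surface integral is absorbed into the 2-form'' --- is not an argument. The space $H(\mathrm{curl},D)$ imposes no trace condition on the magnetic component, and for two independent admissible variations $\delta_1\mathbf H,\delta_2\mathbf H$ the integrand $(\delta_1\mathbf H\times\delta_2\mathbf H)\cdot\mathbf n$ has no reason to vanish pointwise on $\partial D$: it does not vanish even when both variations are tangential (then the cross product is parallel to $\mathbf n$), and it would vanish only under a condition of the type $\mathbf n\times\delta\mathbf H|_{\partial D}=\mathbf 0$, which PEC does not provide. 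So, as written, this is a genuine gap. In fairness, the paper's own proof is no more rigorous here: its final sentence treats the $\mathbf H$-surface integral exactly as the $\mathbf E$-one, even though only $\mathbf E$ carries a boundary condition. Closing the argument honestly requires either an extra hypothesis (periodic boundary, or a propagated pointwise constraint on $\mathbf H|_{\partial D}$) or a proof that the variations generated by the flow satisfy such a constraint; neither your proposal nor the paper supplies this, so you should not present the ``absorption'' step as if it were established.
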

\begin{proof}
From the formula of change of variables in differential forms, it yields
\begin{equation}\label{omega_def}
\begin{split}
\overline{\omega}(t)=&\int_{D}\Big(\frac{\partial {\bf E}}{\partial {\bf e}}{\rm d}{\bf e}+\frac{\partial{\bf E}}{\partial{\bf h}}{\rm d}{\bf h}\Big)\wedge
\Big(\frac{\partial {\bf H}}{\partial {\bf e}}{\rm d}{\bf e}+\frac{\partial{\bf H}}{\partial{\bf h}}{\rm d}{\bf h}\Big){\rm d}{\bf x}\\[1.5mm]
=&\int_{D}\Big[{\rm d}{\bf e}\wedge \Big(\frac{\partial {\bf E}}{\partial {\bf e}}\Big)^{T}\frac{\partial {\bf H}}{\partial {\bf e}}{\rm d}{\bf e}\Big]{\rm d}{\bf x}
+\int_{D}\Big[{\rm d}{\bf h}\wedge \Big(\frac{\partial {\bf E}}{\partial {\bf h}}\Big)^{T}\frac{\partial {\bf H}}{\partial {\bf h}}{\rm d}{\bf h}\Big]{\rm d}{\bf x}\\[1.5mm]
&+\int_{D}\Big[{\rm d}{\bf e}\wedge
\bigg(\Big(\frac{\partial {\bf E}}{\partial {\bf e}}\Big)^{T}
\frac{\partial{\bf H}}{\partial{\bf h}}
-\Big(\frac{\partial {\bf H}}{\partial {\bf e}}\Big)^{T}
\frac{\partial{\bf E}}{\partial{\bf h}}\bigg)
{\rm d}{\bf h} \Big]{\rm d}{\bf x}.
\end{split}
\end{equation}

We set ${\bf E}_{\bf e}=\frac{\partial {\bf E}}{\partial {\bf e}}$, ${\bf E}_{\bf h}=\frac{\partial {\bf E}}{\partial {\bf h}}$,
${\bf H}_{\bf e}=\frac{\partial {\bf H}}{\partial {\bf e}}$ and ${\bf H}_{\bf h}=\frac{\partial {\bf H}}{\partial {\bf h}}$. Now, thanks to the differentiability with respect to initial data of stochastic infinite-dimensional equations (see \cite[Chapter 9]{PZ2014}), we have
\begin{align}
&{\rm d}{\bf E}_{\bf e}=\Big(\varepsilon^{-1}\nabla\times{\bf H}_{\bf e}+\frac{\delta^2\widetilde{\mathcal H}_{1}}{\delta {\bf E}\delta {\bf H}}{\bf E}_{\bf e}+\frac{\delta^2\widetilde{\mathcal H}_{1}}{\delta {\bf H}^2}{\bf H}_{\bf e}\Big){\rm d}t,~{\bf E}_{\bf e}(0)=Id,\label{4.4}\\
&{\rm d}{\bf H}_{\bf e}=\Big(-\mu^{-1}\nabla\times{\bf E}_{\bf e}
-\frac{\delta^2\widetilde{\mathcal H}_{1}}{\delta {\bf E}^2}{\bf E}_{\bf e}-\frac{\delta^2\widetilde{\mathcal H}_{1}}{\delta {\bf E}\delta{\bf H}}{\bf H}_{\bf e}
\Big){\rm d}t,~{\bf H}_{\bf e}(0)=0,\label{4.5}\\
&{\rm d}{\bf E}_{\bf h}=\Big(\varepsilon^{-1}\nabla\times{\bf H}_{\bf h}
+\frac{\delta^2\widetilde{\mathcal H}_{1}}{\delta {\bf E}\delta {\bf H}}{\bf E}_{\bf h}+\frac{\delta^2\widetilde{\mathcal H}_{1}}{\delta {\bf H}^2}{\bf H}_{\bf h}
\Big){\rm d}t,~{\bf E}_{\bf h}(0)=0,\label{4.6}\\
&{\rm d}{\bf H}_{\bf h}=\Big(-\mu^{-1}\nabla\times{\bf E}_{\bf h}
-\frac{\delta^2\widetilde{\mathcal H}_{1}}{\delta {\bf E}^2}{\bf E}_{\bf h}-\frac{\delta^2\widetilde{\mathcal H}_{1}}{\delta {\bf E}\delta{\bf H}}{\bf H}_{\bf h}
\Big){\rm d}t,~{\bf H}_{\bf h}(0)=Id.\label{4.7}
\end{align}
	From equality \eqref{omega_def}, we get
	\begin{equation}\label{4.8}
	\begin{split}
	\frac{{\rm d}\overline{\omega}(t)}{{\rm d} t}=&
	\int_{D}\left[{\rm d}{\bf e}\wedge \frac{\rm d}{{\rm d}t}\bigg(\Big(\frac{\partial {\bf E}}{\partial {\bf e}}\Big)^{T}\frac{\partial {\bf H}}{\partial {\bf e}}\bigg){\rm d}{\bf e}
	+{\rm d}{\bf h}\wedge \frac{\rm d}{{\rm d}t}\bigg(\Big(\frac{\partial {\bf E}}{\partial {\bf h}}\Big)^{T}\frac{\partial {\bf H}}{\partial {\bf h}}\bigg){\rm d}{\bf h}\right]{\rm d}{\bf x}\\[2mm]
	&+\int_{D}\left[{\rm d}{\bf e}\wedge
	\frac{\rm d}{{\rm d}t}\bigg(\Big(\frac{\partial {\bf E}}{\partial {\bf e}}\Big)^{T}
	\frac{\partial{\bf H}}{\partial{\bf h}}
	-\Big(\frac{\partial {\bf H}}{\partial {\bf e}}\Big)^{T}
	\frac{\partial{\bf E}}{\partial{\bf h}}\bigg)
	{\rm d}{\bf h} \right]{\rm d}{\bf x}.
	\end{split}
	\end{equation}
	Substituting equations \eqref{4.4}-\eqref{4.7} into the above equality, and using the symmetric property of $\frac{\delta^2\widetilde{\mathcal H}_{1}}{\delta {\bf E}\delta {\bf H}}$, $\frac{\delta^2\widetilde{\mathcal H}_{1}}{\delta {\bf E}^2}$ and $\frac{\delta^2\widetilde{\mathcal H}_{1}}{\delta {\bf H}^2}$,
	 it holds
\begin{align*}
	\frac{{\rm d}\overline{\omega}(t)}{{\rm d} t}=&\int_{D}\Big[{\rm d}{\bf e}\wedge \bigg(\varepsilon^{-1}\big(\nabla\times{\bf H}_{\bf e}\big)^{T}{\bf H}_{\bf e}
	-\mu^{-1}{\bf E}_{\bf e}^{T}\nabla\times{\bf E}_{\bf e}\bigg){\rm d}{\bf e}\Big]{\rm d}{\bf x}\\
&+\int_{D}\Big[{\rm d}{\bf h}\wedge\bigg(\varepsilon^{-1}\big(\nabla\times{\bf H}_{\bf h}\big)^{T}{\bf H}_{\bf h}
	-\mu^{-1}{\bf E}_{\bf h}^{T}\nabla\times{\bf E}_{\bf h}\bigg){\rm d}{\bf h}\Big]{\rm d}{\bf x}\\
	&+\int_{D} \Big[{\rm d}{\bf e}\wedge
	\bigg(\varepsilon^{-1}\big(\nabla\times{\bf H}_{\bf e}\big)^{T}{\bf H}_{\bf h}
	-\mu^{-1}{\bf E}_{\bf e}^{T}\nabla\times{\bf E}_{\bf h}\bigg){\rm d}{\bf h}
	 \Big] {\rm d}{\bf x}\\
	&+\int_{D} \Big[{\rm d}{\bf e}\wedge
	\bigg(
	\mu^{-1}\big(\nabla\times{\bf E}_{\bf e}\big)^{T}{\bf E}_{\bf h}
	-\varepsilon^{-1}{\bf H}_{\bf e}^{T}\nabla\times{\bf H}_{\bf h}\bigg)
	{\rm d}{\bf h} \Big] {\rm d}{\bf x}\\
	=&\int_{D}\varepsilon^{-1}\bigg[ {\rm d}{\bf e}\wedge\big(\nabla\times{\bf H}_{\bf e}\big)^{T}{\bf H}_{\bf e}{\rm d}{\bf e}+{\rm d}{\bf h}\wedge \big(\nabla\times{\bf H}_{\bf h}\big)^{T}{\bf H}_{\bf h}{\rm d}{\bf h} \\
	&\qquad\qquad+{\rm d}{\bf e}\wedge \big(\nabla\times{\bf H}_{\bf e}\big)^{T}{\bf H}_{\bf h}{\rm d}{\bf h}
	-{\rm d}{\bf e}\wedge {\bf H}_{\bf e}^{T}\nabla\times{\bf H}_{\bf h}{\rm d}{\bf h}
	\bigg]{\rm d}{\bf x}\\
	&+\int_{D}\mu^{-1}\bigg[ {\rm d}{\bf e}\wedge\big(\nabla\times{\bf E}_{\bf e}\big)^{T}{\bf E}_{\bf e}{\rm d}{\bf e}+{\rm d}{\bf h}\wedge \big(\nabla\times{\bf E}_{\bf h}\big)^{T}{\bf E}_{\bf h}{\rm d}{\bf h} \\
	&\qquad\qquad+{\rm d}{\bf e}\wedge \big(\nabla\times{\bf E}_{\bf e}\big)^{T}{\bf E}_{\bf h}{\rm d}{\bf h}
	-{\rm d}{\bf e}\wedge {\bf E}_{\bf e}^{T}\nabla\times{\bf E}_{\bf h}{\rm d}{\bf h}
	\bigg]{\rm d}{\bf x}.
\end{align*}
The properties of wedge product lead to
\begin{align}
	\frac{{\rm d}\overline{\omega}(t)}{{\rm d} t}	=&\int_{D}\varepsilon^{-1}\bigg[ \nabla\times{\bf H}_{\bf e}{\rm d}{\bf e}\wedge{\bf H}_{\bf e}{\rm d}{\bf e}+\nabla\times{\bf H}_{\bf h}{\rm d}{\bf h}\wedge {\bf H}_{\bf h}{\rm d}{\bf h} \nonumber\\
	&\qquad\qquad+\nabla\times{\bf H}_{\bf e}{\rm d}{\bf e}\wedge {\bf H}_{\bf h}{\rm d}{\bf h}
	-{\bf H}_{\bf e}{\rm d}{\bf e}\wedge \nabla\times{\bf H}_{\bf h}{\rm d}{\bf h}
	\bigg]{\rm d}{\bf x}\nonumber\\
	&+\int_{D}\mu^{-1}\bigg[ \nabla\times{\bf E}_{\bf e}{\rm d}{\bf e}\wedge{\bf E}_{\bf e}{\rm d}{\bf e}+\nabla\times{\bf E}_{\bf h}{\rm d}{\bf h}\wedge {\bf E}_{\bf h}{\rm d}{\bf h} \nonumber\\
	&\qquad\qquad+\nabla\times{\bf E}_{\bf e}{\rm d}{\bf e}\wedge {\bf E}_{\bf h}{\rm d}{\bf h}
	-{\bf E}_{\bf e}{\rm d}{\bf e}\wedge \nabla\times{\bf E}_{\bf h}{\rm d}{\bf h}
	\bigg]{\rm d}{\bf x}\label{100}\\
	=&\int_{D}\varepsilon^{-1}\left({\rm d}\big(\nabla\times {\bf H}\big)\wedge {\rm d}{\bf H}\right)+\mu^{-1}\left({\rm d}\big(\nabla\times{\bf E}\big)\wedge{\rm d}{\bf E}\right){\rm d}{\bf x}\nonumber\\
	=&\int_{D}\varepsilon^{-1}\left(\frac{\partial}{\partial x}({\rm d}H_2\wedge {\rm d}H_3)+\frac{\partial}{\partial y}({\rm d}H_3\wedge {\rm d}H_1)+\frac{\partial}{\partial z}({\rm d}H_1\wedge {\rm d}H_2)\right){\rm d}{\bf x}\nonumber\\
	&\quad+\int_{D}\mu^{-1}\left(\frac{\partial}{\partial x}({\rm d}E_2\wedge {\rm d}E_3)+\frac{\partial}{\partial y}({\rm d}E_3\wedge {\rm d}E_1)+\frac{\partial}{\partial z}({\rm d}E_1\wedge {\rm d}E_2)\right){\rm d}{\bf x}.\nonumber
	\end{align}
From the zero boundary conditions, we derive immediately the result. Therefore the proof is completed.
\end{proof}

\section{Stochastic Runge-Kutta semidiscretizations}
In this section, we will study the stochastic Runge-Kutta semidiscretizations for stochastic Maxwell equations and state our main results.
For time interval $[0,T]$, introducing the uniform partition $0=t_0<t_1<\ldots<t_{N}=T$. Let $\tau=T/N$, and $\Delta W^{n+1}=W(t_{n+1})-W(t_n)$, $n=0,1,\ldots,N-1$. Applying $s$-stage stochastic Runge-Kutta methods, which only depend on the increments of the Wiener process, to \eqref{sM_equations} in temporal direction, we obtain
\begin{subequations}\label{RK method}
	\begin{align}
&	U_{ni}=u^{n}+\tau\sum_{j=1}^{s}a_{ij}\big(MU_{nj}+F(t_n+c_j\tau,U_{nj})\big)+\Delta W^{n+1}\sum_{j=1}^{s}\widetilde{a}_{ij}B(t_{n}+c_j\tau),\label{RK method_1}\\
&	u^{n+1}=u^{n}+\tau\sum_{i=1}^{s}b_i\big(MU_{ni}+F(t_n+c_i\tau,U_{ni})\big)+\Delta W^{n+1}\sum_{i=1}^{s}\widetilde{b}_{i}B(t_n+c_i\tau),\label{RK method_2}
	\end{align}
\end{subequations}
for $i=1,\ldots,s$ and $n=0,\ldots,N-1$. Here $A=\big(a_{ij}\big)_{s\times s}$ and $\widetilde{A}=\big(\widetilde{a}_{ij}\big)_{s\times s}$ are $s\times s$ matrices of real elements while $b^{T}=(b_1,\ldots,b_s)$ and $\widetilde{b}^{T}=(\widetilde{b}_1,\ldots,\widetilde{b}_s)$ are real vectors.

In order to prove, for a fixed $n\in\mathbb{N}$, the existence of a solution of \eqref{RK method_1}-\eqref{RK method_2}, for which the implicitness may be from the drift part,  we first introduce the concepts of algebraical stability and coercivity condition for Runge-Kutta method $(A,b)$.

\begin{definition}\label{stable}
	A Runge-Kutta method $(A,b)$ with
	$A=\big(a_{ij}\big)_{i,j=1}^{s}$ and $b=\big(b_i\big)_{i=1}^{s}$ is called algebraically stable, if $b_i\geq 0$ for $i=1,\ldots, s$ and
	\begin{equation}
	{\mathcal M}=\big(m_{ij}\big)_{i,j=1}^{s}\quad\text{with}\quad
	m_{ij}=b_ia_{ij}+b_j a_{ji}-b_{i}b_{j}
	\end{equation}
	is positive semidefinite.
\end{definition}

\begin{definition}\label{coercivity}
	We say that a Runge-Kutta matrix $A$ satisfies the coercivity condition if it is invertible, and there exists a diagonal positive definite matrix ${\mathcal K}={\rm diag}(k_i)$ and a positive scalar $\alpha$ such that
\begin{equation}\label{coercivity condition}
u^{T}{\mathcal K}(A)^{-1}u\geq \alpha u^{T}{\mathcal K}u,\quad \text{for all }u\in{\mathbb R}^{s}.
\end{equation}
\end{definition}

The coercivity plays an important role in the existence of numerical solution of Runge-Kutta method.
%
To present more clearly the stochastic Runge-Kutta methods \eqref{RK method_1}-\eqref{RK method_2}, we consider two concrete examples.

\begin{example}[Implicit Euler method]
	The implicit Euler method is an implicit stochastic Runge-Kutta method with Butcher Tableau given by
	\begin{center}
		\begin{tabular}{c|c}
			{\rm 1} & {\rm 1}\\
			\hline           & {\rm 1}
		\end{tabular}~,
		\quad\quad\begin{tabular}{c|c}
			{\rm 1} & {\rm 1}\\
			\hline           & {\rm 1}
		\end{tabular}~.
	\end{center}
If we apply the implicit Euler method to stochastic Maxwell equations \eqref{sM_equations} we obtain the recursion
\begin{align*}
&U_{n1}=u^n+\tau \Big(MU_{n1}+F(t_{n+1},U_{n1})\Big)+\Delta W^{n+1}B(t_{n+1}),\\
&u^{n+1}=u^n+\tau \Big(MU_{n1}+F(t_{n+1},U_{n1})\Big)+\Delta W^{n+1}B(t_{n+1}),
\end{align*}
where we abbreviated $t_{n+1}=t_n+\tau$. Clearly, we have $U_{n1}=u^{n+1}$ and hence we can write the
midpoint method compactly as
\begin{equation}\label{im Euler method}
u^{n+1}=u^n+\tau \Big(Mu^{n+1}+F(t_{n+1},u^{n+1})\Big)+\Delta W^{n+1}B(t_{n+1}).
\end{equation}
By introducing  operator
\begin{equation}\label{operators dis_euler}
S^{\rm IE}_{\tau}=(Id-{\tau}M)^{-1},
\end{equation}
we can write the equivalent form of implicit Euler method as
\begin{equation}\label{im Euler un}
u^{n+1}=S^{\rm IE}_{\tau}u^n+\tau S^{\rm IE}_{\tau}F^{n+1}+S^{\rm IE}_{\tau}B^{n+1}\Delta W^{n+1}.
\end{equation}
Note that the implicit Euler method is algebraical stable with ${\mathcal M}=1$, and satisfies the coercivity condition.
\end{example}

\begin{example}[Midpoint method]
	The midpoint method is another example of implicit stochastic Runge-Kutta method which is given by
\begin{center}
\begin{tabular}{c|c}
       {\rm 1/2} & {\rm 1/2}\\
\hline           & {\rm 1}
\end{tabular}~,
\quad\quad\begin{tabular}{c|c}
       {\rm 1/2} & {\rm 1/2}\\
\hline           & {\rm 1}
\end{tabular}.
\end{center}
If we apply the midpoint method to stochastic Maxwell equations \eqref{sM_equations} we obtain the recursion
	\begin{align*}
	U_{n1}=u^n+\frac{\tau}{2} \Big(MU_{n1}+F(t_{n+1/2},U_{n1})\Big)+\frac{\Delta W^{n+1}}{2}B(t_{n+1/2}),\\
	u^{n+1}=u^n+\tau \Big(MU_{n1}+F(t_{n+1/2},U_{n1})\Big)+\Delta W^{n+1}B(t_{n+1/2}),
	\end{align*}
	where we abbreviated $t_{n+1/2}=t_n+\tau/2$. Clearly, we have $U_{n1}=(u^{n+1}+u^{n})/2$ and hence we can write the
	midpoint method compactly as
	\begin{equation}\label{midpoint method}
	u^{n+1}=u^{n}+\frac{\tau}{2}M(u^{n+1}+u^{n})+\tau F^{n+\frac12}+ B^{n+\frac12}\Delta W^{n+1},
	\end{equation}
	where $F^{n+\frac12}=F(t_{n+\frac12},(u^n+u^{n+1})/2)$ and $B^{n+\frac12}=B(t_{n+\frac12})$. By introducing  operators
	\begin{equation}\label{operators dis_Mid}
	S^{\rm Mid}_{\tau}=(Id-\frac{\tau}{2}M)^{-1}(I+\frac{\tau}{2}M), ~and~ T^{\rm Mid}_{\tau}=(Id-\frac{\tau}{2}M)^{-1},
	\end{equation} we can write the equivalent form of midpoint method as
	\begin{equation}\label{mild un}
	u^{n+1}=S^{\rm Mid}_{\tau}u^n+\tau T^{\rm Mid}_{\tau}F^{n+\frac12}+T^{\rm Mid}_{\tau}B^{n+\frac12}\Delta W^{n+1}.
	\end{equation}	
	Note that the midpoint method is algebraical stable with ${\mathcal M}=0$ which means stochastic symplecticity (see Theorem \ref{symplecticity of RK}), and satisfies the coercivity condition.
\end{example}

\subsection{Symplectic condition of stochastic Runge-Kutta semidiscretizations}	
In this subsection, we analyze the condition of symplecticity for stochastic Runge-Kutta semidiscretizations \eqref{RK method_1}-\eqref{RK method_2}.
	\begin{theorem}\label{symplecticity of RK}
		Assume that the coefficients $a_{ij},b_i$ of stochastic Runge-Kutta method \eqref{RK method_1}-\eqref{RK method_2} satisfy
		\begin{align}\label{cond00}
	m_{ij}=b_ia_{ij}+b_j a_{ji}-b_{i}b_{j}\equiv 0,
		\end{align}
		for all $i,j=1,2,\cdots,s$, then the  \eqref{RK method_1}-\eqref{RK method_2} is stochastic symplectic with the discrete stochastic symplectic conservation law ${\mathbb P}$-a.s.,
		\[\bar{\omega}^{n+1}=\int_{D}{\rm d}{\bf E}^{n+1}\wedge {\rm d}{\bf H}^{n+1}{\rm d}{\bf x} =\int_{D}{\rm d}{\bf E}^{n}\wedge {\rm d}{\bf H}^{n}{\rm d}{\bf x}=\bar{\omega}^{n}.\]
		
	\end{theorem}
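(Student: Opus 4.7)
The strategy is to mimic the continuous-time symplecticity argument given earlier in the paper, adapted to the Runge-Kutta setting. Since the diffusion operator $B(t)$ in \eqref{sM_equations} is additive (depending only on $t$ and $\mathbf{x}$), its exterior differential with respect to the initial data $(\mathbf{e},\mathbf{h})$ vanishes, so all noise terms appearing in \eqref{RK method_1}--\eqref{RK method_2} drop out of $\mathrm{d}\bar\omega^{n+1}-\mathrm{d}\bar\omega^{n}$. The problem therefore reduces essentially to verifying the discrete analogue of the classical deterministic Runge-Kutta symplecticity condition, which is well known to hinge on the vanishing of $m_{ij}$.

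Concretely, I would split $U_{n,i}=(\mathbf{E}_{n,i}^{T},\mathbf{H}_{n,i}^{T})^{T}$ into its $\mathbf{E}$- and $\mathbf{H}$-components and denote by $\dot P_{i}$ and $\dot Q_{i}$ the corresponding components of $MU_{n,i}+F(t_{n}+c_{i}\tau,U_{n,i})$. Applying the exterior differential $\mathrm{d}$ with respect to $(\mathbf{e},\mathbf{h})$ throughout \eqref{RK method_1}--\eqref{RK method_2} yields $\mathrm{d}\mathbf{E}^{n+1}=\mathrm{d}\mathbf{E}^{n}+\tau\sum_{i}b_{i}\,\mathrm{d}\dot P_{i}$ and $\mathrm{d}\mathbf{H}^{n+1}=\mathrm{d}\mathbf{H}^{n}+\tau\sum_{i}b_{i}\,\mathrm{d}\dot Q_{i}$, together with the analogous stage identities relating $\mathrm{d}\mathbf{E}_{n,i}$, $\mathrm{d}\mathbf{H}_{n,i}$ to $\mathrm{d}\mathbf{E}^{n}$, $\mathrm{d}\mathbf{H}^{n}$ via $a_{ij}$.

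Next, I would expand $\mathrm{d}\mathbf{E}^{n+1}\wedge\mathrm{d}\mathbf{H}^{n+1}$, replace each occurrence of $\mathrm{d}\mathbf{E}^{n}$ in the linear-in-$\tau$ cross terms by $\mathrm{d}\mathbf{E}_{n,i}-\tau\sum_{j}a_{ij}\,\mathrm{d}\dot P_{j}$ (and analogously for $\mathrm{d}\mathbf{H}^{n}$), and symmetrize the summation indices. A direct computation then collects the resulting $\tau^{2}$-contribution into precisely $-\tau^{2}\sum_{i,j}m_{ij}\,\mathrm{d}\dot P_{i}\wedge\mathrm{d}\dot Q_{j}$, which vanishes by \eqref{cond00}. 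The surviving linear remainder is $\tau\sum_{i}b_{i}\bigl(\mathrm{d}\mathbf{E}_{n,i}\wedge\mathrm{d}\dot Q_{i}+\mathrm{d}\dot P_{i}\wedge\mathrm{d}\mathbf{H}_{n,i}\bigr)$.

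The last step is to integrate this residue over $D$ and show it is zero. Substituting $\dot P_{i}=\varepsilon^{-1}\nabla\times\mathbf{H}_{n,i}-\varepsilon^{-1}\mathbf{J}_{e}(\cdot,\mathbf{E}_{n,i},\mathbf{H}_{n,i})$ and $\dot Q_{i}=-\mu^{-1}\nabla\times\mathbf{E}_{n,i}-\mu^{-1}\mathbf{J}_{m}(\cdot,\mathbf{E}_{n,i},\mathbf{H}_{n,i})$, and expanding $\mathrm{d}\dot P_{i}$, $\mathrm{d}\dot Q_{i}$ by the chain rule, the symmetry of $DG$ established in the preceding lemma (equivalently the symmetry of the Hessian of $\widetilde{\mathcal H}_{1}$) together with the antisymmetry of the wedge product cancels all Hessian contributions, and the remaining curl-terms condense into a total divergence by exactly the same manipulation that produced the integrand in \eqref{100}. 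The perfect-conducting boundary condition $\mathbf{n}\times\mathbf{E}=0$ on $\partial D$ then forces the surface integral to vanish, yielding $\bar\omega^{n+1}=\bar\omega^{n}$ $\mathbb{P}$-a.s. The main obstacle I foresee is keeping the wedge-product bookkeeping organized so that the coefficient $m_{ij}$ isolates cleanly; once that is in hand the boundary-term calculation is a direct transcription of the argument from Section~3.
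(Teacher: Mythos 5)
Your proposal is correct and follows essentially the same route as the paper: differentiate the scheme with respect to the initial data (the additive noise terms indeed drop out), expand the wedge product, eliminate ${\rm d}{\bf E}^{n},{\rm d}{\bf H}^{n}$ via the stage equations so that the $\tau^{2}$-terms carry the coefficient $m_{ij}$, kill the Hessian contributions by symmetry, and reduce the surviving $\tau\sum_i b_i$ curl terms to a total divergence annihilated by the PEC boundary condition. The only difference is notational — you work componentwise with $({\rm d}{\bf E},{\rm d}{\bf H})$ and the full stage derivatives $\dot P_i,\dot Q_i$, whereas the paper uses the compact form ${\rm d}u\wedge{\mathbb J}{\rm d}u$ and splits $M$ from ${\mathbb J}\,\delta^2\widetilde{\mathcal H}_1/\delta u^2$ from the outset — which does not change the substance of the argument.
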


	\begin{proof}
		It follows from equations \eqref{RK method_1} and \eqref{RK method_2} that
		\begin{subequations}
			\begin{align}
			&{\rm d}U_{ni}={\rm d}u^n+\tau\sum_{j=1}^{s}a_{ij}M{\rm d}U_{nj}+\tau\sum_{j=1}^{s}a_{ij}{\mathbb J}\frac{\delta^2 \widetilde{\mathcal H}_{1}}{\delta u^2}{\rm d}U_{nj},\label{var_RK_1}\\
			&{\rm d}u^{n+1}={\rm d}u^{n}+\tau\sum_{i=1}^{s}b_iM{\rm d}U_{ni}+\tau\sum_{i=1}^{s}b_{i}{\mathbb J}\frac{\delta^2 \widetilde{\mathcal H}_{1}}{\delta u^2}{\rm d}U_{ni},\label{var_RK_2}
			\end{align}
		\end{subequations}
	where we use $F={\mathbb J}\frac{ \delta\widetilde{\mathcal H}_{1} }{\delta u}$.
		Therefore, we have
		\begin{align}\label{eq1}
		&{\rm d}u^{n+1}\wedge {\mathbb J}{\rm d}u^{n+1}-{\rm d}u^{n}\wedge {\mathbb J}{\rm d}u^{n}\nonumber\\
		&=\left({\rm d}u^{n}+\tau\sum_{i=1}^{s}b_iM{\rm d}U_{ni}+\tau\sum_{i=1}^{s}b_{i}{\mathbb J}\frac{\delta^2 \widetilde{\mathcal H}_{1}}{\delta u^2}{\rm d}U_{ni}\right)\nonumber\\
		&\qquad\wedge {\mathbb J}\left({\rm d}u^{n}+\tau\sum_{i=1}^{s}b_iM{\rm d}U_{ni}+\tau\sum_{i=1}^{s}b_{i}{\mathbb J}\frac{\delta^2 \widetilde{\mathcal H}_{1}}{\delta u^2}{\rm d}U_{ni}\right)-{\rm d}u^{n}\wedge {\mathbb J}{\rm d}u^{n}\nonumber\\
		&=\tau\sum_{i=1}^{s}b_i\left({\rm d}u^n\wedge{\mathbb J}M{\rm d}U_{ni}+M{\rm d}U_{ni}\wedge{\mathbb J}{\rm d}u^n\right)\\
		&\qquad+\tau\sum_{i=1}^{s}b_{i}\Big( {\rm d}u^n\wedge{\mathbb J}^2\frac{\delta^2 \widetilde{\mathcal H}_{1}}{\delta u^2}{\rm d}U_{ni}+{\mathbb J}\frac{\delta^2 \widetilde{\mathcal H}_{1}}{\delta u^2}{\rm d}U_{ni}\wedge{\mathbb J}{\rm d}u^n \Big)\nonumber\\
		&\qquad+\tau^2\sum_{i,j=1}^{s}b_ib_j\Big(M{\rm d}U_{ni}\wedge{\mathbb J}M{\rm d}U_{nj}+
		{\mathbb J}\frac{\delta^2 \widetilde{\mathcal H}_{1}}{\delta u^2}{\rm d}U_{ni}\wedge {\mathbb J}^2\frac{\delta^2 \widetilde{\mathcal H}_{1}}{\delta u^2}{\rm d}U_{nj}\Big)\nonumber\\
		&\qquad+\tau^2\sum_{i,j=1}^{s}b_ib_j\Big(
		M{\rm d}U_{ni}\wedge {\mathbb J}^2\frac{\delta^2 \widetilde{\mathcal H}_{1}}{\delta u^2}{\rm d}U_{nj}+{\mathbb J}\frac{\delta^2 \widetilde{\mathcal H}_{1}}{\delta u^2}{\rm d}U_{ni}\wedge {\mathbb J}M{\rm d}U_{nj}\nonumber
		\Big).
		\end{align}
		From \eqref{var_RK_1}, we have
		\begin{equation*}
		{\rm d}u^n={\rm d}U_{ni}-\tau\sum_{j=1}^{s}a_{ij}M{\rm d}U_{nj}-\tau\sum_{j=1}^{s}a_{ij}{\mathbb J}\frac{\delta^2 \widetilde{\mathcal H}_{1}}{\delta u^2}{\rm d}U_{nj}.
		\end{equation*}		
		Substituting  the above equation into the first and second terms on the right-hand side of \eqref{eq1}, we obtain
		\begin{equation}\label{eq2}
		\begin{split}
		&{\rm d}u^{n+1}\wedge {\mathbb J}{\rm d}u^{n+1}-{\rm d}u^{n}\wedge {\mathbb J}{\rm d}u^{n}\\
		&=\tau\sum_{i=1}^{s}b_i\left({\rm d}U_{ni}\wedge{\mathbb J}M{\rm d}U_{ni}+M{\rm d}U_{ni}\wedge{\mathbb J}{\rm d}U_{ni}\right)\\
		&\qquad+\tau\sum_{i=1}^{s}b_i\left({\rm d}U_{ni}\wedge{\mathbb J}^2\frac{\delta^2 \widetilde{\mathcal H}_{1}}{\delta u^2}{\rm d}U_{ni}+{\mathbb J}\frac{\delta^2 \widetilde{\mathcal H}_{1}}{\delta u^2}{\rm d}U_{ni}\wedge{\mathbb J}{\rm d}U_{ni}\right)\\
		 &\qquad+\tau^2\sum_{i,j=1}^{s}\left(b_ib_j-b_ia_{ij}-b_ja_{ji}\right)\left(M{\rm d}U_{ni}\wedge{\mathbb J}M{\rm d}U_{nj}\right)\\
		 &\qquad+2\tau^2\sum_{i,j=1}^{s}\left(b_ib_j-b_ia_{ij}-b_ja_{ji}\right)\left(M{\rm d}U_{ni}\wedge{\mathbb J}^2\frac{\delta^2 \widetilde{\mathcal H}_{1}}{\delta u^2}{\rm d}U_{nj}\right)\\
		 &\qquad+\tau^2\sum_{i,j=1}^{s}\left(b_ib_j-b_ia_{ij}-b_ja_{ji}\right)\left({\mathbb J}\frac{\delta^2 \widetilde{\mathcal H}_{1}}{\delta u^2}{\rm d}U_{ni}\wedge{\mathbb J}^2\frac{\delta^2 \widetilde{\mathcal H}_{1}}{\delta u^2}{\rm d}U_{nj}\right).
		\end{split}
		\end{equation}
		From the symmetry of $\frac{\delta^2 \widetilde{\mathcal H}_{1}}{\delta u^2}$, the value of the second term on the right-hand side of \eqref{eq2} is zero. From the  symplectic condition \eqref{cond00}, the third, forth and fifth terms on the right-hand side of \eqref{eq2} are also zeros. Therefore,
		\begin{equation*}
		\begin{split}
			{\rm d}u^{n+1}\wedge {\mathbb J}{\rm d}u^{n+1}-{\rm d}u^{n}\wedge {\mathbb J}{\rm d}u^{n}
		=\tau\sum_{i=1}^{s}b_i\left({\rm d}U_{ni}\wedge{\mathbb J}M{\rm d}U_{ni}+M{\rm d}U_{ni}\wedge{\mathbb J}{\rm d}U_{ni}\right).
		\end{split}
		\end{equation*}
 Recalling $u=\begin{pmatrix}{\bf E}\\ {\bf H}\end{pmatrix}$ and the Maxwell operator $M$ in \eqref{M_operator}, and using the skew-symmetry of ${\mathbb J}$, it yields
		\begin{align}
		&{\rm d}{\bf E}^{n+1}\wedge {\rm d}{\bf H}^{n+1}-{\rm d}{\bf E}^{n}\wedge {\rm d}{\bf H}^{n}\nonumber\\
		&=\frac{1}{2}\left({\rm d}u^{n+1}\wedge {\mathbb J}{\rm d}u^{n+1}-{\rm d}u^{n}\wedge {\mathbb J}{\rm d}u^{n}\right)\\
		&=\tau\sum_{i=1}^{s}b_i\left({\rm d}U_{ni}\wedge{\mathbb J}M{\rm d}U_{ni}\right)\nonumber\\
		&=-\tau\sum_{i=1}^{s}b_i\left[\mu^{-1}{\rm d}{\bf E}_{ni}\wedge(\nabla\times{\rm d}{\bf E}_{ni})+\varepsilon^{-1}{\rm d}{\bf H}_{ni}\wedge(\nabla\times{\rm d}{\bf H}_{ni})\right].\nonumber
		\end{align}
		Thereby, by using the similar proof approach in the last two steps of \eqref{100} it holds
		\begin{equation*}
		\begin{split}
		&\int_{D}{\rm d}{\bf E}^{n+1}\wedge {\rm d}{\bf H}^{n+1}{\rm d}{\bf x}-\int_{D}{\rm d}{\bf E}^{n}\wedge {\rm d}{\bf H}^{n}{\rm d}{\bf x}\\
		&=-\tau\sum_{i=1}^{s}b_i\int_{D}\left[\mu^{-1}{\rm d}{\bf E}_{ni}\wedge(\nabla\times{\rm d}{\bf E}_{ni})+\varepsilon^{-1}{\rm d}{\bf H}_{ni}\wedge(\nabla\times{\rm d}{\bf H}_{ni})\right]{\rm d}{\bf x}
		=0.
		\end{split}
		\end{equation*}		
		Thus, the proof is completed.
	\end{proof}

\begin{remark}
	Note that for a symplectic Runge-Kutta method, it satisfies algebraically stable condition automatically. 
\end{remark}

\subsection{Regularity of stochastic Runge-Kutta semidiscretizations}
In this subsection, we present the results of well-posedness and regularity of numerical solution given by stochastic Runge-Kutta method \eqref{RK method_1}-\eqref{RK method_2} satisfying the algebraical stability and coercivity conditions.

First, we utilize Kronecker product to rewrite \eqref{RK method_1}-\eqref{RK method_2} in a compact form,
\begin{subequations}\label{RK_compact}
	\begin{align}
	&U_{n}={\bf 1}_{s}\otimes u^n+\tau \big(A\otimes M\big)U_{n}+\tau \big(A\otimes I\big)F^{n}(U_n)+\big(\widetilde{A}\otimes I\big)B^{n}\Delta W^{n+1},\label{RK_compact_1}\\[2mm]
	&u^{n+1}=u^{n}+\tau\big(b^{T}\otimes M\big)U_n+\tau\big(b^{T}\otimes I\big)F^{n}(U_n)+\big(\widetilde{b}^{T}\otimes I\big)B^{n}\Delta W^{n+1},\label{RK_compact_2}
	\end{align}
\end{subequations}
where ${\bf 1}_{s}=[1,\ldots,1]^{T}$, $I$ is the identity matrix of size $6\times 6$, and
\[
U_{n}=\begin{bmatrix}
U_{n1}\\ U_{n2}\\ \cdots \\U_{ns}
\end{bmatrix},
\qquad
F^{n}(U_n)=\begin{bmatrix}
F(t_n+c_1\tau,U_{n1})\\F(t_n+c_2\tau,U_{n2})\\ \cdots \\F(t_n+c_s\tau,U_{ns})
\end{bmatrix},
\qquad
B^{n}=\begin{bmatrix}
B(t_n+c_1\tau)\\B(t_n+c_2\tau)\\ \cdots \\B(t_n+c_s\tau)
\end{bmatrix}.
\]

Next, we give some useful estimates on the operator $(A\otimes M)$, under the coercivity condition of  matrix $A$.
\begin{lemma}\label{est operator}
Let matrix $A$ satisfy coercivity condition \eqref{coercivity condition}. Then there exists constant $C$ such that
	\begin{itemize}
		\item [(i)]$\|\Big(I_{6s\times 6s}-\tau(A\otimes M)\Big)^{-1}\|_{{\mathcal L}({\mathbb H}^s;{\mathbb H}^s)}\leq C$;
		\item [(ii)] $\|I_{6s\times 6s}-\Big(I_{6s\times 6s}-\tau(A\otimes M)\Big)^{-1}\|_{{\mathcal L}(({\mathcal D}(M))^s;{\mathbb H}^s)}\leq C\tau$.
	\end{itemize}
\end{lemma}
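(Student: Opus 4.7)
The plan is to combine the coercivity of $A$ (used through the diagonal weight $\mathcal{K}$) with the skew-adjointness of $M$ on $\mathcal{D}(M)$ in a weighted energy estimate. Given $V\in\mathbb{H}^s$, any $U$ solving $(I-\tau(A\otimes M))U=V$ must in fact lie in $(\mathcal{D}(M))^s$, since $(A\otimes M)U=\tau^{-1}(U-V)\in\mathbb{H}^s$ and the invertibility of $A$ forces $MU_i\in\mathbb{H}$ for every $i$. The key step is to multiply the equation by $A^{-1}\otimes I$ to decouple $A$ from the unbounded piece,
\begin{equation*}
(A^{-1}\otimes I)U-\tau(I\otimes M)U=(A^{-1}\otimes I)V,
\end{equation*}
and then test against $(\mathcal{K}\otimes I)U$ in $\mathbb{H}^s$.

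The skew-adjointness of $M$ together with the diagonal structure of $\mathcal{K}$ makes the $M$-term drop out:
$\langle(I\otimes M)U,(\mathcal{K}\otimes I)U\rangle_{\mathbb{H}^s}=\sum_i k_i\langle MU_i,U_i\rangle_{\mathbb{H}}=0$.
Expanding and symmetrizing the remaining term gives
$\tfrac12\sum_{i,j}(\mathcal{K} A^{-1}+A^{-T}\mathcal{K})_{ij}\langle U_i,U_j\rangle_{\mathbb{H}}$,
and the coercivity inequality \eqref{coercivity condition} is exactly the statement that the symmetric matrix $\mathcal{K} A^{-1}+A^{-T}\mathcal{K}\succeq 2\alpha\mathcal{K}$. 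Expanding each $\langle U_i,U_j\rangle_{\mathbb{H}}$ in an orthonormal basis of $\mathbb{H}$ and applying this scalar inequality mode-by-mode lower-bounds the term by $\alpha\sum_i k_i\|U_i\|_{\mathbb{H}}^2$. A Cauchy--Schwarz on the right-hand side, the bound $\|A^{-1}\|\leq C$, and the equivalence of the $\mathcal{K}$-weighted and standard norms on $\mathbb{H}^s$ yield (i) with a constant independent of $\tau$.

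For (ii) I would use the resolvent-type identity
\begin{equation*}
I-(I-\tau(A\otimes M))^{-1}=-\tau(I-\tau(A\otimes M))^{-1}(A\otimes M),
\end{equation*}
valid on $(\mathcal{D}(M))^s$. For $V$ in that space, applying this identity, invoking (i) for the resolvent, and estimating $\|(A\otimes M)V\|_{\mathbb{H}^s}\leq C(\sum_j\|MV_j\|_{\mathbb{H}}^2)^{1/2}\leq C\|V\|_{(\mathcal{D}(M))^s}$ (since $A$ is a fixed finite matrix) produce the factor $\tau$ required for (ii).

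The main technical obstacle is justifying that $(I-\tau(A\otimes M))^{-1}$ exists as a bounded operator on all of $\mathbb{H}^s$. The energy estimate delivers injectivity and the a priori bound, but surjectivity requires something extra because $M$ is unbounded. The cleanest route is a spectral reduction: the coercivity inequality specialized to eigenvectors of $A^{-1}$ forces every eigenvalue of $A^{-1}$ to have real part at least $\alpha>0$, so every eigenvalue $\lambda$ of $A$ satisfies $\mathrm{Re}(\lambda)>0$. Placing $A$ in Jordan form reduces inverting $I-\tau(A\otimes M)$ to inverting the block-triangular $I-\tau(J\otimes M)$, whose diagonal blocks $I-\tau\lambda_iM$ are boundedly invertible because $\sigma(M)\subset i\mathbb{R}$ and $\mathrm{Re}(\lambda_i)>0$ together exclude $1/\tau$ from $\sigma(\lambda_i M)$ for every $\tau>0$. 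Once surjectivity is in hand, the a priori estimate from (i) applies directly and (ii) follows.
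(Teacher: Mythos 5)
Your argument is correct and is essentially the paper's: both parts of (i) rest on testing $(I-\tau(A\otimes M))U=V$ against $(\mathcal{K}A^{-1}\otimes I)U$ (your premultiplication by $A^{-1}\otimes I$ followed by testing with $(\mathcal{K}\otimes I)U$ is the same computation), killing the unbounded term via $\sum_i k_i\langle MU_i,U_i\rangle_{\mathbb H}=0$ and lower-bounding the remainder with the coercivity condition, while (ii) follows in both proofs from the identity $I-\bigl(I-\tau(A\otimes M)\bigr)^{-1}=-\tau\bigl(I-\tau(A\otimes M)\bigr)^{-1}(A\otimes M)$ together with (i). The only substantive difference is that you also justify existence of the inverse (surjectivity) through the positivity of $\mathrm{Re}\,\sigma(A)$ and a Jordan-form reduction, a point the paper's proof leaves implicit.
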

\begin{proof}
	In order to estimate the operator $I_{6s\times 6s}-\Big(I_{ 6s\times 6s}-\tau\big(A\otimes M\big)\Big)^{-1}$, we denote $v^{n+1}=\Big(I_{ 6s\times 6s}-\tau\big(A\otimes M\big)\Big)^{-1}v^n$, and then $\{v^n\}_{n\in{\mathbb N}}$ is the discrete solution of the following discrete system
	\begin{equation}\label{5.15}
	v^{n+1}=v^{n}+\tau\big(A\otimes M\big)v^{n+1}.
	\end{equation}
	Suppose that $A$ satisfies the coercivity  condition, we apply $\langle v^{n+1},\big({\mathcal K}A^{-1}\otimes I\big)\cdot \rangle_{{\mathbb H}^s}$ to both sides of \eqref{5.15} and get
	\begin{equation}\label{5.17}
	\begin{split}
	\langle v^{n+1},\big({\mathcal K}A^{-1}\otimes I\big)v^{n+1} \rangle_{{\mathbb H}^s}
	=&\langle v^{n+1},\big({\mathcal K}A^{-1}\otimes I\big)v^{n} \rangle_{{\mathbb H}^s}\\
	&+\tau\langle v^{n+1},\big({\mathcal K}A^{-1}\otimes I\big)\big(A\otimes M\big)v^{n+1} \rangle_{{\mathbb H}^s}.
	\end{split}
	\end{equation}
	Since
	\[
	\langle v^{n+1},\big({\mathcal K}A^{-1}\otimes I\big)v^{n+1} \rangle_{{\mathbb H}^s}
	\geq \alpha \sum_{i=1}^{s}k_i\|v^{n+1,i}\|^2_{\mathbb H}\geq \alpha\min\{k_i\} \|v^{n+1}\|^2_{{\mathbb H}^s}:=\tilde{\alpha}\|v^{n+1}\|^2_{{\mathbb H}^s},
	\]
	and
	\[
	\langle v^{n+1},\big({\mathcal K}A^{-1}\otimes I\big)\big(A\otimes M\big)v^{n+1} \rangle_{{\mathbb H}^s}=\langle v^{n+1},\big({\mathcal K}\otimes M\big)v^{n+1} \rangle_{{\mathbb H}^s}=\sum_{i=1}^{s}k_i\langle v^{n+1,i} , Mv^{n+1,i}\rangle_{{\mathbb H}}=0,
	\]
	we get for \eqref{5.17}
	\[
	\tilde{\alpha}\|v^{n+1}\|^2_{{\mathbb H}^s}\leq \langle v^{n+1},\big({\mathcal K}A^{-1}\otimes I\big)v^{n} \rangle_{{\mathbb H}^s}
	\leq \gamma \|v^{n+1}\|^2_{{\mathbb H}^s}+\frac{C}{\gamma}\|v^n\|^2_{{\mathbb H}^s},
	\]
	where $C$ depends on $|{\mathcal K}|$ and $|A^{-1}|$.
	Taking $\gamma=\tilde{\alpha}/2$ leads to
	\[
	\|v^{n+1}\|^2_{{\mathbb H}^s}\leq C\|v^{n}\|^2_{{\mathbb H}^s},
	\]
	where the constant $C$ depends on $\tilde{\alpha}$, $|{\mathcal K}|$ and $|A^{-1}|$. It means that
	\begin{equation}\label{5.18}
	\|\Big(I_{ 6s\times 6s}-\tau\big(A\otimes M\big)\Big)^{-1}v^{n}\|^2_{{\mathbb H}^s}\leq C\|v^{n}\|^2_{{\mathbb H}^s}
	\end{equation}
	Thus we show the first assertion.
	Similarly, we may show that
	\[
	\|\big(A\otimes M\big)v^{n+1}\|^2_{{\mathbb H}^s}\leq C\|\big(A\otimes M\big)v^{n}\|^2_{{\mathbb H}^s}.
	\]
	From
	\begin{equation}\label{5.16}
	\bigg[\Big(I_{ 6s\times 6s}-\tau\big(A\otimes M\big)\Big)^{-1}-I_{6s\times 6s}\bigg]v^n=v^{n+1}-v^n=\tau(A\otimes M)v^{n+1},
	\end{equation}
	it follows that
	\[
	\left\|\bigg[\Big(I_{ 6s\times 6s}-\tau\big(A\otimes M\big)\Big)^{-1}-I_{6s\times 6s}\bigg]v^n\right\|_{{\mathbb H}^s}
	=\tau\|(A\otimes M)v^{n+1}\|_{{\mathbb H}^s}
	\leq C\tau\|(A\otimes M)v^{n}\|_{{\mathbb H}^s},
	\]
	which leads to the second assertion.
	
\end{proof}

Now we are in the position to present the existence and uniqueness of the numerical solution given by the stochastic Runge-Kutta method \eqref{RK method}.

\begin{theorem}\label{wellposedness-RK}
	In addition to conditions of Proposition \ref{wellposedness thm}, let $B(t)\in HS(U_0,{\mathcal D}(M))$ for any $t\in[0,T]$.
	Let the Runge-Kutta method $(A,b)$ be algebraically stable and coercive.
			For $p\geq 2$ and fix $T=t_{N}>0$, there exists an unique ${\mathbb H}$-valued $\{{\mathcal F}_{t_n}\}_{0\leq n\leq N}$-adapted discrete solution $\{ u^n; ~n=0,1,\ldots,N\}$ of the  scheme \eqref{RK method} for sufficiently small $\tau\leq \tau^{*}$ with $\tau^{*}:= \tau^{*}(\|u_0\|_{\mathbb H},T)$, and a constant $C:=C(p,T,\sup_{t\in[0,T]}\|B(t)\|_{HS(U,{\mathcal D}(M))})>0$ such that
	\begin{align}
	\max_{1\leq i\leq s}{\mathbb E}\|U_{ni}\|_{\mathbb H}^{p}&\leq C\big({\mathbb E}\|u^n\|_{\mathbb H}^{p}+\tau\big),\label{bound U_ni}\\[0.6em]
	\max_{1\leq n\leq N}{\mathbb E}\|u^n\|^{p}_{\mathbb H}&\leq C\big(1+\|u_0\|^{p}_{L^{p}(\Omega;\mathbb H}\big).\label{bound un_RK}
	\end{align}
\end{theorem}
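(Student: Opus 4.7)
My plan is to split the argument into three parts, mirroring the three assertions: existence/uniqueness of the stage vector $U_n$, the estimate \eqref{bound U_ni} on the stages, and the global estimate \eqref{bound un_RK} on $u^n$.

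For existence and uniqueness, I would rewrite \eqref{RK_compact_1} in the resolvent form
\[
U_n = \bigl(I_{6s\times 6s}-\tau(A\otimes M)\bigr)^{-1}\Bigl[\mathbf{1}_s\otimes u^n + \tau(A\otimes I)F^n(U_n) + (\widetilde{A}\otimes I)B^n\Delta W^{n+1}\Bigr],
\]
using the coercivity of $A$ and Lemma \ref{est operator}(i) to guarantee that the bounded inverse exists on $\mathbb{H}^s$. Denoting the right-hand side as a map $\Phi_n(U_n)$, the global Lipschitz property of $F$ together with Lemma \ref{est operator}(i) yields
\[
\|\Phi_n(U_n)-\Phi_n(\widetilde{U}_n)\|_{\mathbb{H}^s} \leq C\tau\|U_n-\widetilde{U}_n\|_{\mathbb{H}^s},
\]
so $\Phi_n$ is a contraction once $\tau$ is small enough (independently of $u^n$ and $\omega$). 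The Banach fixed point theorem then produces a pathwise unique $U_n\in L^p(\Omega;\mathbb{H}^s)$; measurability/adaptedness with respect to $\mathcal{F}_{t_{n+1}}$ follows because $\Phi_n$ depends measurably on $u^n$ and $\Delta W^{n+1}$. Substituting $U_n$ into \eqref{RK_compact_2} then defines $u^{n+1}$, and induction on $n$ yields the discrete sequence.

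For \eqref{bound U_ni}, I would test the compact equation \eqref{RK_compact_1} against $(\mathcal{K}A^{-1}\otimes I)U_n$ in $\mathbb{H}^s$, exactly as in the proof of Lemma \ref{est operator}. The coercivity inequality bounds the left side from below by $\tilde\alpha\|U_n\|^2_{\mathbb{H}^s}$; the term $\tau\langle U_n,(\mathcal{K}\otimes M)U_n\rangle_{\mathbb{H}^s}$ vanishes by skew-adjointness of $M$; the drift contribution is absorbed via Young's inequality and the linear growth of $F$; the stochastic contribution $\langle U_n,(\mathcal{K}A^{-1}\otimes I)(\widetilde{A}\otimes I)B^n\Delta W^{n+1}\rangle_{\mathbb{H}^s}$ is controlled by $\mathbb{E}\|\Delta W^{n+1}\|^2\leq \tau\,\mathrm{tr}(Q)$ and the boundedness of $B$ in $HS(U_0,\mathbb{H})$. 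Taking $p/2$-th powers, using the Burkholder--Davis--Gundy inequality for $p>2$, and absorbing terms for small $\tau$ yields \eqref{bound U_ni}.

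The main obstacle, and the step requiring most care, is \eqref{bound un_RK}, for which algebraic stability is essential. The plan is to expand $\|u^{n+1}\|^2_{\mathbb{H}}$ from \eqref{RK_compact_2}, then eliminate $u^n$ inside $\langle u^n, MU_{ni}+F_i\rangle_{\mathbb{H}}$ via $u^n = U_{ni}-\tau\sum_j a_{ij}(MU_{nj}+F_j)-\Delta W^{n+1}\sum_j \widetilde{a}_{ij}B_j$. Symmetrizing the deterministic quadratic terms produces the combination
\[
-\tau^2\sum_{i,j=1}^{s}\bigl(b_i a_{ij}+b_j a_{ji}-b_i b_j\bigr)\bigl\langle MU_{ni}+F_i,\,MU_{nj}+F_j\bigr\rangle_{\mathbb{H}} = -\tau^2\sum_{i,j}m_{ij}\langle\cdot,\cdot\rangle_{\mathbb{H}},
\]
which is non-positive by algebraic stability, and using $\langle U_{ni},MU_{ni}\rangle_{\mathbb{H}}=0$ kills the remaining unbounded drift. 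One is left with
\[
\|u^{n+1}\|^2_{\mathbb{H}} \leq \|u^n\|^2_{\mathbb{H}} + 2\tau\sum_i b_i\langle U_{ni},F_i\rangle_{\mathbb{H}} + \mathcal{R}^{n+1},
\]
where $\mathcal{R}^{n+1}$ collects stochastic and mixed stochastic--deterministic terms involving $\Delta W^{n+1}$ and $B^n$. Taking $p/2$-th powers, expectations, and using \eqref{bound U_ni}, the linear growth of $F$, the boundedness of $B$ in $HS(U_0,\mathbb{H})$, the martingale property that kills pure stochastic integrals, and the Burkholder--Davis--Gundy inequality for the squared martingale pieces, yields
\[
\mathbb{E}\|u^{n+1}\|^p_{\mathbb{H}} \leq (1+C\tau)\,\mathbb{E}\|u^n\|^p_{\mathbb{H}} + C\tau,
\]
and \eqref{bound un_RK} then follows from the discrete Gronwall inequality. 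The delicate point is the careful bookkeeping of the cubic and quartic mixed terms arising when raising the one-step recursion to the $p/2$-th power; here I would systematically bound each mixed term by $\varepsilon \mathbb{E}\|U_{ni}\|^p_{\mathbb{H}}$ plus a $\tau$-weighted remainder, choose $\varepsilon$ small, and iterate via \eqref{bound U_ni} to close the estimate.
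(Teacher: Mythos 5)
Your overall strategy coincides with the paper's for the two estimates: the resolvent/coercivity argument for \eqref{bound U_ni} (your direct test against $({\mathcal K}A^{-1}\otimes I)U_n$ is exactly how Lemma \ref{est operator} is proved, so the two are interchangeable), and for \eqref{bound un_RK} the same elimination of $u^n$ via \eqref{RK method_1}, symmetrization to produce $-\tau^2\sum_{i,j}m_{ij}\langle MU_{ni}+F^{ni},MU_{nj}+F^{nj}\rangle_{\mathbb H}$, and the identity $\langle U_{ni},MU_{ni}\rangle_{\mathbb H}=0$. Your existence argument does differ: you use Banach's fixed point theorem on the resolvent form, whereas the paper invokes a Galerkin approximation with Brouwer's theorem followed by a measurable-selector argument and the Doob--Dynkin lemma. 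Since $F$ is globally Lipschitz and Lemma \ref{est operator}(i) gives a uniform bound on the resolvent, your contraction argument is valid, is arguably cleaner, and delivers uniqueness and pathwise measurability in one stroke; the paper's route is the one that would survive if $F$ were only locally Lipschitz or monotone.

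There is, however, one concrete gap in Step 3 of your plan. After substituting for $u^n$, the remainder ${\mathcal R}^{n+1}$ contains the mixed term $2\tau\sum_{i,j}\big(b_i\widetilde b_j-b_i\widetilde a_{ij}\big)\langle B^{nj}\Delta W^{n+1},\,MU_{ni}+F^{ni}\rangle_{\mathbb H}$, and in particular the contribution $\langle B^{nj}\Delta W^{n+1},\,MU_{ni}\rangle_{\mathbb H}$. This does not vanish in expectation (the stage $U_{ni}$ depends on $\Delta W^{n+1}$), and it cannot be absorbed the way you describe, because at this level of regularity there is no bound on $\|MU_{ni}\|_{\mathbb H}$ in terms of $\|u^n\|_{\mathbb H}$. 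The only way to close the estimate is to use the skew-adjointness of $M$ to write $\langle B^{nj}\Delta W^{n+1},MU_{ni}\rangle_{\mathbb H}=-\langle M(B^{nj}\Delta W^{n+1}),U_{ni}\rangle_{\mathbb H}$ and then control $\|M(B^{nj}\Delta W^{n+1})\|_{\mathbb H}^2$; this is precisely why the theorem assumes $B(t)\in HS(U_0,{\mathcal D}(M))$ rather than merely $HS(U_0,{\mathbb H})$, a hypothesis your proposal states but never uses. With that integration by parts inserted, the rest of your bookkeeping (Young's inequality, expectation, \eqref{bound U_ni}, discrete Gronwall) goes through as you describe. A minor further remark: for $p>2$ the Burkholder--Davis--Gundy inequality is not really needed for a single increment; the Gaussian moment bound ${\mathbb E}\|\Delta W^{n+1}\|^p\leq C\tau^{p/2}$ suffices.
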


	\begin{proof}		
	We only present the proof for $p=2$ here, since the proof for general $p>2$ is similar.
	
	{\em Step 1: Existence and $\{{\mathcal F}_{t_n}\}_{0\leq n\leq N}$-adaptedness.} Fix a set $\Omega^{'}\subset\Omega$, ${\mathbb P}(\Omega^{'})=1$ such that $W(t,\omega)\in U$ for all $t\in[0,T]$ and $\omega\in\Omega^{'}$. In the following, let us assume that $\omega\in\Omega^{'}$. The existence of iterates $\{ u^n; ~n=0,1,\ldots,N\}$
	follows from a standard Galerkin method and Brouwer's theorem, in combining with assertions \eqref{bound U_ni}-\eqref{bound un_RK}.
	
	Define a map
	\begin{equation*}
	\begin{split}
	\Lambda:~{\mathbb H}\times U\to{\mathcal P}({\mathbb H}),
	\quad(u^n,\Delta W^{n+1})\to \Lambda(u^n,\Delta W^{n+1}),
	\end{split}
	\end{equation*}
	where ${\mathcal P}({\mathbb H})$ denotes the set of all subsets of ${\mathbb H}$, and $\Lambda(u^n,\Delta W^{n+1})$ is the set of solutions $u^{n+1}$ of \eqref{RK method}. By the closedness of the graph of $\Lambda$ and a selector theorem,  there exists a universally and Borel measurable mapping $\lambda_n:~{\mathbb H}\times U\to {\mathbb H}$ such that $\lambda_n(s_1,s_2)\in\Lambda(s_1,s_2)$ for all $(s_1,s_2)\in {\mathbb H}\times U$. Therefore, ${\mathcal F}_{t_{n+1}}$-measurability of $u^{n+1}$ follows from the Doob-Dynkin lemma.
	
	{\em Step 2: proof for \eqref{bound U_ni}.}
	From the compact formula \eqref{RK_compact_1} and the invertibility of $A$, we get
	\begin{equation}\label{Un}
	\begin{split}
	U_{n}=&\Big(I_{ 6s\times 6s}-\tau\big(A\otimes M\big)\Big)^{-1}\big({\bf 1}_{s}\otimes u^n\big)
	+\tau\Big(I_{ 6s\times 6s}-\tau\big(A\otimes M\big)\Big)^{-1}\big(A\otimes I\big)F^{n}\\
	&+\Big(I_{ 6s\times 6s}-\tau\big(A\otimes M\big)\Big)^{-1}\Big(\big(\widetilde{A}\otimes I\big)B^{n}\Delta W^{n+1}\Big).
	\end{split}
	\end{equation}
	Using assertion (i) of Lemma \ref{est operator},
	we obtain,
	\begin{equation}\label{5.7}
	\begin{split}
	\|U_{n}\|^2_{{\mathbb H}^s}
	&\leq C\|{\bf 1}_{s}\otimes u^n 
	+\tau\big(A\otimes I\big)F^{n}+\big(\widetilde{A}\otimes I\big)B^{n}\Delta W^{n+1}\|^2_{{\mathbb H}^s}\\
	&\leq C\|u^n\|^2_{{\mathbb H}}+\tau^2\sum_{i=1}^{s}\|F^{ni}\|^2_{{\mathbb H}}+\sum_{i=1}^{s}\|B^{ni}\Delta W^{n+1}\|^2_{{\mathbb H}}\\
	&\leq C\|u^n\|^2_{{\mathbb H}}+C\tau^2\sum_{i=1}^{s}\big(1+\|U_{ni}\|^2_{{\mathbb H}}\big)+\sum_{i=1}^{s}\|B^{ni}\Delta W^{n+1}\|^2_{{\mathbb H}}\\
	&	\leq C\|u^n\|^2_{{\mathbb H}}+C\tau^2+C\tau^2\|U_{n}\|^2_{{\mathbb H}^s}+\sum_{i=1}^{s}\|B^{ni}\Delta W^{n+1}\|^2_{{\mathbb H}}.
	\end{split}
	\end{equation}
	Taking expectation on both sides of \eqref{5.7}, we have
	\begin{equation}
	{\mathbb E}\|U_{n}\|^2_{{\mathbb H}^s}\leq
	C{\mathbb E}	\|u^n\|^2_{{\mathbb H}}+C\tau+C\tau^2{\mathbb E}\|U_{n}\|^2_{{\mathbb H}^s}.
	\end{equation}
	For sufficiently small step size, by Gronwall inequality,
	one gets
	\[
	{\mathbb E}\|U_{n}\|^2_{{\mathbb H}^s}\leq
	C{\mathbb E}	\|u^n\|^2_{{\mathbb H}}+C\tau.
	\]
	Because of the identity $\sum_{i=1}^{s}\|U_{ni}\|^2_{{\mathbb H}}=\|U_{n}\|^2_{{\mathbb H}^s}$,	
	the proof of \eqref{bound U_ni} is completed.
	
	{\em Step 3: Uniqueness.} The uniqueness of discrete solution follows from the uniqueness of $U_{ni}$, $i=1,\ldots,s$.	
	
	Assume that there are two different solutions $U_{n}$ and $V_{n}$ satisfying \eqref{RK_compact_1}, then it follows
	\begin{equation}
	U_{n}-V_{n}=\tau(A\otimes M)\big(U_n-V_n\big)+\tau(A\otimes I)\big(F^n(U_n)-F^n(V_n)\big),
	\end{equation}
	which is equivalent to
	\begin{equation}
	U_{n}-V_{n}=\tau\Big(I_{6s\times 6s}-\tau(A\otimes M)\Big)^{-1}(A\otimes I)\big(F^n(U_n)-F^n(V_n)\big).
	\end{equation}
	From the assertion (i) of Lemma \ref{est operator} and globally Lipschitz property of function $F$, it follows that
	\begin{equation}
	\|U_{n}-V_{n}\|_{{\mathbb H}^s}\leq C\tau \|U_{n}-V_{n}\|_{{\mathbb H}^s}.
	\end{equation}
	Obviously, when the time step $\tau$ is sufficiently small, the internal stages $U_{ni}$ is unique, hence the discrete solution $u^{n+1}$ is unique.
	
	{\em Step 4: proof for \eqref{bound un_RK}.}
	We start from \eqref{RK method_2} to get
	\begin{equation}\label{5.9}
	\begin{split}
	\|u^{n+1}\|^2_{{\mathbb H}}=&\|u^{n}\|^2_{{\mathbb H}}+\|\tau\sum_{i=1}^{s}b_i\big(MU_{ni}+F^{ni}\big)\|^2_{{\mathbb H}}
	+ \|\sum_{i=1}^{s}\widetilde{b}_iB^{ni}\Delta W^{n+1}\|^2_{{\mathbb H}}\\
	&	+2\langle u^{n},~\tau\sum_{i=1}^{s}b_i\big(MU_{ni}+F^{ni}\big) \rangle_{{\mathbb H}}			+2\langle u^{n},~\sum_{i=1}^{s}\widetilde{b}_iB^{ni}\Delta W^{n+1} \rangle_{{\mathbb H}}\\
	&+2\langle \tau\sum_{i=1}^{s}b_i\big(MU_{ni}+F^{ni}\big),~ \sum_{i=1}^{s}\widetilde{b}_iB^{ni}\Delta W^{n+1}\rangle_{{\mathbb H}}.
	\end{split}
	\end{equation}			
	From \eqref{RK method_1}, we know that
	\begin{equation}\label{5.10}
	u^{n}=U_{ni}-\tau\sum_{j=1}^{s}a_{ij}\big(MU_{nj}+F^{nj}\big)-\sum_{j=1}^{s}\widetilde{a}_{ij}B^{nj}\Delta W^{n+1},
	\end{equation}
	and then substitute \eqref{5.10} into the first term of the second line on the right-hand side of \eqref{5.9} to get
	\begin{align*}
	2\tau&\sum_{i=1}^{s}b_i	\langle u^{n},~MU_{ni}+F^{ni} \rangle_{{\mathbb H}}\\
	=&
	2\tau\sum_{i=1}^{s}b_i	\langle U_{ni}, ~ MU_{ni}+F^{ni} \rangle_{{\mathbb H}} 	-2\tau^2\sum_{i,j=1}^{s}b_ia_{ij}\langle MU_{nj}+F^{nj},~ MU_{ni}+F^{ni} \rangle_{{\mathbb H}}
	\\
	&-2\tau\sum_{i,j=1}^{s}b_i \widetilde{a}_{ij}\langle B^{nj}\Delta W^{n+1},~
	MU_{ni}+F^{ni} \rangle_{{\mathbb H}} \\
	=&2\tau\sum_{i=1}^{s}b_i	\langle U_{ni}, ~F^{ni} \rangle_{{\mathbb H}}  -\tau^2\sum_{i,j=1}^{s}\big(b_ia_{ij}+b_ja_{ji}\big)\langle MU_{nj}+F^{nj},~ MU_{ni}+F^{ni} \rangle_{{\mathbb H}}
	\\
	&-2\tau\sum_{i,j=1}^{s}b_i \widetilde{a}_{ij}\langle B^{nj}\Delta W^{n+1},~
	MU_{ni}+F^{ni} \rangle_{{\mathbb H}}
	\end{align*}
	where in the last step we have used the fact $\langle U_{ni},~MU_{ni} \rangle_{{\mathbb H}}=0 $.
	Combining the above equality together with \eqref{5.9},	 we get
	\begin{align}\label{5.3}
	\|u^{n+1}\|^2_{{\mathbb H}}=&\|u^{n}\|^2_{{\mathbb H}}
	+ \|\sum_{i=1}^{s}\widetilde{b}_iB^{ni}\Delta W^{n+1}\|^2_{{\mathbb H}}
	+2\tau\sum_{i=1}^{s}b_i	\langle U_{ni}, ~F^{ni} \rangle_{{\mathbb H}} 		\nonumber\\
	& +\tau^2\sum_{i,j=1}^{s}\big(b_ib_j-b_ia_{ij}-b_ja_{ji}\big)\langle MU_{nj}+F^{nj},~ MU_{ni}+F^{ni} \rangle_{{\mathbb H}}
	\\
	&+2\langle u^{n},~\sum_{i=1}^{s}\widetilde{b}_iB^{ni}\Delta W^{n+1} \rangle_{{\mathbb H}}+2\tau\sum_{i,j=1}^{s}\big(b_i\widetilde{b}_j-b_i \widetilde{a}_{ij}\big)\langle B^{nj}\Delta W^{n+1},~
	MU_{ni}+F^{ni} \rangle_{{\mathbb H}}.
	\nonumber
	\end{align}
	Since the method $(A,b)$ is algebraically stable, the second line of \eqref{5.3} is not positive, then we end up with
	\begin{equation}\label{5.4}
	\begin{split}
	\|u^{n+1}\|^2_{{\mathbb H}}\leq&\|u^{n}\|^2_{{\mathbb H}}
	+ \|\sum_{i=1}^{s}\widetilde{b}_iB^{ni}\Delta W^{n+1}\|^2_{{\mathbb H}}
	+2\tau\sum_{i=1}^{s}b_i	\langle U_{ni}, ~F^{ni} \rangle_{{\mathbb H}} 		\\
	&+2\langle u^{n},~\sum_{i=1}^{s}\widetilde{b}_iB^{ni}\Delta W^{n+1} \rangle_{{\mathbb H}}+2\tau\sum_{i,j=1}^{s}\big(b_i\widetilde{b}_j-b_i \widetilde{a}_{ij}\big)\langle B^{nj}\Delta W^{n+1},~
	MU_{ni}+F^{ni} \rangle_{{\mathbb H}}\\
	\leq & \|u^{n}\|^2_{{\mathbb H}} +C(1+\tau)\sum_{i=1}^{s}\|B^{ni}\Delta W^{n+1}\|^2_{{\mathbb H}}
	+C\tau\sum_{i=1}^{s}\|M(B^{ni}\Delta W^{n+1})\|^2_{{\mathbb H}}\\
	&+C\tau\sum_{i=1}^{s}\|U_{ni}\|^2_{{\mathbb H}}
	+C\tau\sum_{i=1}^{s}\|F^{ni}\|^2_{{\mathbb H}}+2C\tau\sum_{i=1}^{s}b_i	\langle U_{ni}, ~F^{ni} \rangle_{{\mathbb H}}.
	\end{split}
	\end{equation}
	Applying expectation and using conditions on $F$, $B$ and $Q$ lead to
	\begin{equation}
	{\mathbb E}\|u^{n+1}\|^2_{{\mathbb H}}\leq
	{\mathbb E}\|u^{n}\|^2_{{\mathbb H}}+C\tau+C\tau{\mathbb E}\|U_n\|^2_{{\mathbb H}^{s}}.
	\end{equation}	
	Substituting \eqref{bound U_ni} into the above inequality, we get
	\begin{equation}
	{\mathbb E}\|u^{n+1}\|^2_{{\mathbb H}} \leq (1+C\tau){\mathbb E}\|u^{n}\|^2_{{\mathbb H}}+C\tau,
	\end{equation}	
	which by Gronwall's inequality means the boundedness of numerical solution.
	Therefore we complete the proof of \eqref{bound un_RK}.
	Combining Steps 1-4, we complete the proof.
\end{proof}

\begin{remark}
Note that for the well-posedness of stochastic Runge-Kutta method, we require  the additional spatial smooth assumptions on function  $B$, which comes from term $\|M(B^{ni}\Delta W^{n+1})\|^2_{{\mathbb H}}$ and needs $\sup_{t\in[0,T]}\|B(t)\|_{HS(U_0,{\mathcal D}(M))}<\infty$.
\end{remark}

Now we are in the position to discuss the regularity in ${\mathcal D}(M^k)$ ($k\in{\mathbb N}$) of the numerical solution given by stochastic Runge-Kutta method.

	\begin{proposition}\label{prop}
Let Assumption \ref{assum_F} and Assumption \ref{assum_B} be fulfilled with $\alpha=k$ and $\beta=k+1$, respectively, and suppose the initial data $u_0\in L^{p}(\Omega;{\mathcal D}(M^k))$ for some $p\geq 2$.
	For the solution of \eqref{RK method_1}-\eqref{RK method_2},  there exists a constant $C:=C(p,T,\sup_{t\in[0,T]}\|B(t)\|_{HS(U,{\mathcal D}(M^{k+1}))})>0$ such that
		\begin{align}
\max_{1\leq i\leq s}{\mathbb E}\|U_{ni}\|_{{\mathcal D}(M^k)}^{p}&\leq C\big({\mathbb E}\|u^n\|_{{\mathcal D}(M^k)}^{p}+\tau\big),\label{bound U_ni_re}\\[0.6em]
\max_{1\leq n\leq N}{\mathbb E}\|u^n\|^{p}_{{\mathcal D}(M^k)}&\leq C\big(1+\|u_0\|^{p}_{L^{p}(\Omega;{\mathcal D}(M^k))}\big).\label{bound un_RK_re}
\end{align}
	\end{proposition}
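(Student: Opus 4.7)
The plan is to adapt the argument of Theorem 4.2 (well-posedness and $\mathbb{H}$-regularity) to the higher-regularity space $\mathcal{D}(M^k)$. The crucial observation is that $M^k$ commutes with $(A\otimes M)$ and hence with $(I-\tau(A\otimes M))^{-1}$, so that the whole algebraic structure of the compact formulation \eqref{RK_compact_1}--\eqref{RK_compact_2} survives after applying $M^k$. Together with Assumption \ref{assum_F} at level $\alpha=k$ (which provides the linear growth $\|F(t,v)\|_{\mathcal{D}(M^k)}\leq C(1+\|v\|_{\mathcal{D}(M^k)})$) and Assumption \ref{assum_B} at level $\beta=k+1$ (which controls $M^{k+1}B$), this suffices to run the same argument in a higher-order norm.

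First I would lift Lemma \ref{est operator} from $\mathbb{H}^s$ to $\mathcal{D}(M^k)^s$: since $M^k(I-\tau(A\otimes M))^{-1}v=(I-\tau(A\otimes M))^{-1}M^kv$ on the appropriate domain, applying Lemma \ref{est operator} to $M^k v \in \mathbb{H}^s$ immediately gives
\[
\bigl\|(I_{6s\times 6s}-\tau(A\otimes M))^{-1}\bigr\|_{\mathcal{L}(\mathcal{D}(M^k)^s)}\leq C,
\qquad
\bigl\|I_{6s\times 6s}-(I_{6s\times 6s}-\tau(A\otimes M))^{-1}\bigr\|_{\mathcal{L}(\mathcal{D}(M^{k+1})^s;\mathcal{D}(M^k)^s)}\leq C\tau.
\]
Armed with this, the proof of \eqref{bound U_ni_re} mirrors Step~2 of Theorem \ref{wellposedness-RK}: take the $\mathcal{D}(M^k)$-norm in \eqref{Un}, bound $\|F^n(U_n)\|_{\mathcal{D}(M^k)^s}$ by $C(1+\|U_n\|_{\mathcal{D}(M^k)^s})$ using Assumption \ref{assum_F}, control the stochastic term by the It\^o isometry and Assumption \ref{assum_B}, and absorb the $O(\tau^2)$ self-term for $\tau$ small enough.

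For \eqref{bound un_RK_re} I would apply $M^k$ to \eqref{RK method_2} and repeat Step~4 of the proof of Theorem \ref{wellposedness-RK} with $u^n$, $U_{ni}$, $F^{ni}$, $B^{ni}\Delta W^{n+1}$ replaced by $M^ku^n$, $M^kU_{ni}$, $M^kF^{ni}$, $M^k B^{ni}\Delta W^{n+1}$. The key skew-adjointness cancellation $\langle M^k U_{ni},\,M^{k+1}U_{ni}\rangle_{\mathbb{H}}=0$ still holds for $U_{ni}\in\mathcal{D}(M^{k+1})$ (an immediate consequence of the skew-adjointness of $M$), so the algebraic stability of $(A,b)$ again annihilates the quadratic term in $M(M^kU_{ni})+M^k F^{ni}$. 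One then arrives at an inequality of the form
\[
\mathbb{E}\|u^{n+1}\|_{\mathcal{D}(M^k)}^{p}\leq (1+C\tau)\,\mathbb{E}\|u^n\|_{\mathcal{D}(M^k)}^{p}+C\tau,
\]
and Gronwall's inequality yields the claim; the case $p>2$ is handled exactly as in Theorem \ref{wellposedness-RK} via BDG.

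The main obstacle is the stochastic term $M^k B^{ni}\Delta W^{n+1}$: after applying the operator $M$ coming from the $MU_{ni}$ factor, one is forced to estimate $\|M^{k+1}(B^{ni}\Delta W^{n+1})\|_{\mathbb{H}}$, which is precisely why Assumption \ref{assum_B} has to be imposed at level $\beta=k+1$ rather than $\beta=k$. The remaining bookkeeping (growth of $F^{ni}$ in $\mathcal{D}(M^k)$, cross terms between $M^k U_{ni}$ and $M^{k+1}B^{ni}\Delta W^{n+1}$, Gronwall) is routine once the lifted Lemma \ref{est operator} and the skew-adjointness identity above are in place.
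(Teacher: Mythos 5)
Your proposal is correct and follows exactly the route the paper intends: its proof of this proposition is simply the one-line remark that the argument mirrors Steps 2 and 4 of Theorem \ref{wellposedness-RK}, and your elaboration (commuting $M^k$ through the resolvent to lift Lemma \ref{est operator} to ${\mathcal D}(M^k)^s$, using Assumption \ref{assum_F} with $\alpha=k$ for the growth of $F$, the skew-adjointness cancellation $\langle M^kU_{ni},M^{k+1}U_{ni}\rangle_{\mathbb H}=0$ together with algebraic stability, and Gronwall) is precisely the intended adaptation. You also correctly pinpoint why $\beta=k+1$ is needed, which is consistent with the paper's Remark 4.2 in the base case $k=0$.
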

\begin{proof}
The proof is similar as in Step2 and Step 4 of Theorem \ref{wellposedness-RK}.	
\end{proof}

\begin{proposition}\label{holder_RK}
	Under the same assumption as in Proposition \ref{prop}, we have for $0\leq t,s\leq T$,
	\begin{align}
&	{\mathbb E}\|u^{n+1}-u^{n}\|_{{\mathcal D}(M^{k-1})}^p\leq C\tau^{p/2},\\
&	\|{\mathbb E}(u^{n+1}-u^{n})\|_{{\mathcal D}(M^{k-1})}\leq C\tau.
	\end{align}
	Moreover, if $u^{n+1}$ is replaced by $U_{ni}$, the above estimates still hold.
\end{proposition}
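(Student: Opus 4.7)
The plan is to take the difference $u^{n+1}-u^{n}$ directly from the recursion \eqref{RK method_2},
\[
u^{n+1}-u^{n}=\tau\sum_{i=1}^{s}b_i\bigl(MU_{ni}+F(t_n+c_i\tau,U_{ni})\bigr)+\sum_{i=1}^{s}\widetilde{b}_{i}B(t_n+c_i\tau)\Delta W^{n+1},
\]
and to estimate the drift and diffusion pieces separately in the $\mathcal{D}(M^{k-1})$-norm. The crucial observation is that $\|Mv\|_{\mathcal{D}(M^{k-1})}\le \|v\|_{\mathcal{D}(M^{k})}$ directly from the definition of the graph norm, so that one factor of $M$ can be ``absorbed'' into one extra power of regularity on $U_{ni}$; in particular, Proposition \ref{prop} already supplies the required uniform bound $\max_{i,n}\mathbb{E}\|U_{ni}\|_{\mathcal{D}(M^{k})}^{p}\le C$.

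For the first estimate, I would apply Minkowski in the $p$-th moment. The drift contribution is bounded by
\[
\tau^{p}\,\mathbb{E}\Bigl\|\sum_{i=1}^{s}b_i\bigl(MU_{ni}+F(t_n+c_i\tau,U_{ni})\bigr)\Bigr\|_{\mathcal{D}(M^{k-1})}^{p}\le C\tau^{p}\bigl(1+\max_i\mathbb{E}\|U_{ni}\|_{\mathcal{D}(M^{k})}^{p}\bigr)\le C\tau^{p},
\]
using the linear growth of $F:\mathcal{D}(M^{k})\to\mathcal{D}(M^{k})$ from Assumption \ref{assum_F}. For the stochastic term, the Burkholder--Davis--Gundy inequality (applied in the Hilbert space $\mathcal{D}(M^{k-1})$) together with $B(t)\in HS(U_0,\mathcal{D}(M^{k+1}))\hookrightarrow HS(U_0,\mathcal{D}(M^{k-1}))$ yields
\[
\mathbb{E}\Bigl\|\sum_{i=1}^{s}\widetilde{b}_{i}B(t_n+c_i\tau)\Delta W^{n+1}\Bigr\|_{\mathcal{D}(M^{k-1})}^{p}\le C\tau^{p/2}\sup_{t\in[0,T]}\|B(t)\|_{HS(U_0,\mathcal{D}(M^{k-1}))}^{p}.
\]
Summing the two contributions and using $\tau^{p}\le C\tau^{p/2}$ for bounded $\tau$ gives the first bound $\mathbb{E}\|u^{n+1}-u^{n}\|_{\mathcal{D}(M^{k-1})}^{p}\le C\tau^{p/2}$.

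The second bound exploits that $\Delta W^{n+1}$ is centred and independent of $\mathcal{F}_{t_n}$; however, the internal stages $U_{ni}$ are \emph{not} $\mathcal{F}_{t_n}$-measurable, so the stochastic sum does not vanish in expectation directly. To handle this, I would substitute \eqref{RK method_1} into the stochastic sum, producing a term of the form $\mathbb{E}\bigl[B(t_n+c_i\tau)\Delta W^{n+1}\bigr]=0$ (for the $u^n$ part, which is $\mathcal{F}_{t_n}$-measurable) plus remainders that depend on $\tau (A\otimes M)U_n$ and $(\widetilde{A}\otimes I)B^n\Delta W^{n+1}$; each such remainder is already of order $\tau$ in $\mathcal{D}(M^{k-1})$-norm in expectation, by the Itô isometry and the regularity bounds of Proposition \ref{prop}. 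Combined with the drift estimate (controlled in expectation by Jensen plus the uniform bound from Proposition \ref{prop}), this yields $\|\mathbb{E}(u^{n+1}-u^{n})\|_{\mathcal{D}(M^{k-1})}\le C\tau$.

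Finally, the analogous bounds with $u^{n+1}$ replaced by $U_{ni}$ follow by the identical argument starting from \eqref{RK method_1} rather than \eqref{RK method_2}, since $(a_{ij},\widetilde{a}_{ij})$ play exactly the role of $(b_i,\widetilde{b}_i)$ there. The main technical obstacle is the expectation estimate: controlling the stochastic increment $\sum_i \widetilde{b}_i B^{ni}\Delta W^{n+1}$ in mean requires the substitution trick above to exploit the centredness of $\Delta W^{n+1}$ despite the implicit dependence of $U_{ni}$ on it, and this is where the strengthened hypothesis $\beta=k+1$ (rather than $k$) in Assumption \ref{assum_B} is used, because one needs an additional factor of $M$ acting on $B$ to absorb the $(A\otimes M)$ contribution.
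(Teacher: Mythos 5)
The paper states Proposition~\ref{holder_RK} without any proof, so there is no official argument to compare against. Judged on its own merits, your direct attack on the increment via \eqref{RK method_2}, combined with the absorption $\|Mv\|_{\mathcal{D}(M^{k-1})}\le C\|v\|_{\mathcal{D}(M^{k})}$ and the uniform bounds $\max_{i,n}\mathbb{E}\|U_{ni}\|^{p}_{\mathcal{D}(M^{k})}\le C$ from Proposition~\ref{prop}, is the natural route, and it does deliver both estimates as well as the $U_{ni}$ variant via \eqref{RK method_1}.

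Your treatment of the mean estimate, however, rests on a misreading of the scheme. The noise is additive: $B^{ni}=B(t_n+c_i\tau)$ is a deterministic Hilbert--Schmidt operator that does not involve the internal stages, so the stochastic term $\sum_{i}\widetilde{b}_i B^{ni}\Delta W^{n+1}$ has expectation \emph{exactly zero}, with no adaptedness issue to circumvent. There is nothing to substitute \eqref{RK method_1} into --- that relation determines $U_{ni}$, which simply does not appear in the stochastic sum --- so the ``substitution trick'' you describe is vacuous, and the ``main technical obstacle'' you identify does not exist. The mean bound follows in one line: $\|\mathbb{E}(u^{n+1}-u^{n})\|_{\mathcal{D}(M^{k-1})}\le\tau\sum_{i}|b_i|\,\mathbb{E}\|MU_{ni}+F^{ni}\|_{\mathcal{D}(M^{k-1})}\le C\tau$ by Jensen and Proposition~\ref{prop}; the non-adaptedness of $U_{ni}$ to $\mathcal{F}_{t_n}$ is harmless precisely because the drift already carries a factor $\tau$. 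Relatedly, your closing claim that the strengthened hypothesis $\beta=k+1$ is what makes the mean estimate work is off: for this proposition one only needs $B(t)\in HS(U_0,\mathcal{D}(M^{k-1}))$; the stronger hypothesis is inherited from Proposition~\ref{prop}, where it is used to control terms such as $\|M(B^{ni}\Delta W^{n+1})\|$ in the $\mathcal{D}(M^{k})$ regularity bound. None of this affects the validity of your two moment estimates, which are correct as written.
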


\subsection{Error analysis of stochastic Runge-Kutta semidiscretizations}
Motivated by answering an open problem in \cite[Remark 18]{CH2016} for stochastic Maxwell equations driven by additive noise,
 we establish the error analysis in mean-square sense of the stochastic Runge-Kutta method \eqref{RK method} in this part.

Recall that  the strong solution  of the stochastic Maxwell equations  \eqref{sM_equations} is
\begin{equation}\label{strong solution}
u(t_{n+1})=u(t_{n})+\int_{t_{n}}^{t_{n+1}}Mu(t){\rm d}t
+\int_{t_{n}}^{t_{n+1}}F(t,u(t)){\rm d}t
+\int_{t_{n}}^{t_{n+1}}B(t){\rm d}W(t).
\end{equation}
And
substituting  equation \eqref{Un} into \eqref{RK_compact_2} leads to the following formula of discrete solution
\begin{equation}\label{RK}
\begin{split}
u^{n+1}=&u^{n}+\tau\big(b^{T}\otimes M\big)\Big(I_{ 6s\times 6s}-\tau\big(A\otimes M\big)\Big)^{-1}\big({\bf 1}_{s}\otimes u^n\big)\\
&+\tau\big(b^{T}\otimes I\big)F^{n}(U_n)+\tau^2\big(b^{T}\otimes M\big)\Big(I_{ 6s\times 6s}-\tau\big(A\otimes M\big)\Big)^{-1}\big(A\otimes I\big)F^{n}(U_n)\\
&+\big(\widetilde{b}^{T}\otimes I\big)B^{n}\Delta W^{n+1}+\tau\big(b^{T}\otimes M\big)\Big(I_{ 6s\times 6s}-\tau\big(A\otimes M\big)\Big)^{-1}\Big(\big(\widetilde{A}\otimes I\big)B^{n}\Delta W^{n+1}\Big).
\end{split}
\end{equation}
Let $e^{n}=u(t_n)-u^n$. Substracting \eqref{RK} from \eqref{strong solution}, we obtain
\begin{align}
e^{n+1}=&e^{n}+\underbrace{\int_{t_{n}}^{t_{n+1}}Mu(t){\rm d}t-\tau\big(b^{T}\otimes M\big)\Big(I_{ 6s\times 6s}-\tau\big(A\otimes M\big)\Big)^{-1}\big({\bf 1}_{s}\otimes u^n\big)}_{I}\nonumber\\
&+\underbrace{\int_{t_{n}}^{t_{n+1}}F(t,u(t)){\rm d}t-\tau\big(b^{T}\otimes I\big)F^{n}(U_n)}_{II_{a}}\nonumber\\
&-\underbrace{\tau^2\big(b^{T}\otimes M\big)\Big(I_{ 6s\times 6s}-\tau\big(A\otimes M\big)\Big)^{-1}\big(A\otimes I\big)F^{n}(U_n)}_{II_{b}}\label{error}\\
&+\underbrace{\int_{t_{n}}^{t_{n+1}}B(t){\rm d}W(t)-\big(\widetilde{b}^{T}\otimes I\big)B^{n}\Delta W^{n+1}}_{III_{a}}\nonumber\\
&-\underbrace{\tau\big(b^{T}\otimes M\big)\Big(I_{ 6s\times 6s}-\tau\big(A\otimes M\big)\Big)^{-1}\Big(\big(\widetilde{A}\otimes I\big)B^{n}\Delta W^{n+1}\Big)}_{III_{b}}\nonumber\\
=:&e^{n}+I+II_{a}-II_{b}+III_{a}-III_{b}.\nonumber
\end{align}
Taking $\|\cdot\|^{2}_{\mathbb H}$-norm yields
\begin{equation}
\begin{split}
\|e^{n+1}\|^{2}_{\mathbb H}=&\|e^{n}\|^{2}_{\mathbb H}+\|I\|^{2}_{\mathbb H}+\|II\|^{2}_{\mathbb H}+\|III\|^2_{\mathbb H}+2\langle e^{n},I\rangle_{\mathbb H}+2\langle e^{n},II\rangle_{\mathbb H}+2\langle e^{n},III\rangle_{\mathbb H}\\
&+2\langle I,II\rangle_{\mathbb H}+2\langle I,III\rangle_{\mathbb H}
+2\langle II,III\rangle_{\mathbb H}\\
\leq&(1+\tau) \|e^{n}\|^{2}_{\mathbb H}+3\|I\|^{2}_{\mathbb H}+2\langle e^{n},I\rangle_{\mathbb H}+\Big(3+\frac{C}{\tau}\Big)\|II\|^{2}_{\mathbb H}+3\|III\|^2_{\mathbb H}+2\langle e^{n},III\rangle_{\mathbb H}.
\end{split}
\end{equation}

{\em Step 1.  The estimates of terms $\|I\|^{2}_{\mathbb H}$ and $\langle e^{n},I\rangle_{\mathbb H}$.} From \eqref{error}, we have
\begin{equation}
\begin{split}
I=&\underbrace{\int_{t_n}^{t_{n+1}}\big(Mu(t)-Mu(t_n)\big){\rm d}t}_{I_a}
+\tau Me^{n}\\
&+\underbrace{\tau Mu^{n}-\tau\big(b^{T}\otimes M\big)\Big(I_{ 6s\times 6s}-\tau\big(A\otimes M\big)\Big)^{-1}\big({\bf 1}_{s}\otimes u^n\big)}_{I_b}.
\end{split}
\end{equation}
From Proposition \ref{holder}, we know that
\[
{\mathbb E}\|I_a\|_{\mathbb H}^2\leq \tau\int_{t_n}^{t_{n+1}}{\mathbb E}\|u(t)-u(t_{n})\|^2_{{\mathcal D}(M)}{\rm d}t
\leq C\tau^3,
\]
and
\[
{\mathbb E}\|{\mathbb E}(I_a|{\mathcal F}_{t_n})\|_{\mathbb H}^2\leq \tau\int_{t_n}^{t_{n+1}}\|{\mathbb E}\big(u(t)-u(t_{n})|{\mathcal F}_{t_n}\big)\|^2_{{\mathcal D}(M)}{\rm d}t
\leq C\tau^4,
\]
where the constant $C$ depends on $T$, $\|B(t)\|_{HS(U,{\mathcal D}(M^2))}$ and $\|u_0\|_{L^2(\Omega,{\mathcal D}(M^2))}$.

From Proposition \ref{regularity} and the property of operator $M$, we know that
\[
\|\tau Me^{n}\|^2_{\mathbb H}=-\tau^2\langle e^{n}, M^2 e^{n} \rangle_{{\mathbb H}}
\leq \tau\|e^n\|^2_{\mathbb H}+C\tau^3 \Big(\|M^2 u(t_n)\|^2_{\mathbb H}+\|M^2 u^n\|^2_{\mathbb H}\Big)\leq \tau\|e^n\|^2_{\mathbb H}+C\tau^3 ,
\]
and
\[
\langle e^{n},\tau Me^{n} \rangle_{{\mathbb H}}=0,
\]
where the constant $C$ depends on $T$ and $\|Q^{\frac12}\|_{HS(U,H^2(D))}$.

Under the assumption $\sum_{i=1}^{s}b_i=1$, we know that
\[
\big(b^{T}\otimes I\big)\big({\bf 1}_{s}\otimes Mu^{n}\big)=(b^{T}{\bf 1}_{s})\otimes(IMu^n)=\Big(\sum_{i=1}^{s}b_i\Big)\otimes (Mu^n)=Mu^{n}.
\]
Since $b^{T}\otimes M=\big(b^{T}\otimes I\big)\big(I_{s\times s}\otimes M\big)$
and
$\big(I_{s\times s}\otimes M\big)\big(A\otimes M\big)=A\otimes M^2=\big(A\otimes M\big)\big(I_{s\times s}\otimes M\big)$, we have
\begin{equation}
\begin{split}
&\big(b^{T}\otimes M\big)\Big(I_{ 6s\times 6s}-\tau\big(A\otimes M\big)\Big)^{-1}\big({\bf 1}_{s}\otimes u^n\big)\\
&=\big(b^{T}\otimes I\big)\Big(I_{ 6s\times 6s}-\tau\big(A\otimes M\big)\Big)^{-1}\big(I_{s\times s}\otimes M\big)\big({\bf 1}_{s}\otimes u^n\big)\\
&=\big(b^{T}\otimes I\big)\Big(I_{ 6s\times 6s}-\tau\big(A\otimes M\big)\Big)^{-1}\big({\bf 1}_{s}\otimes Mu^{n}\big).
\end{split}
\end{equation}
Hence for term $I_b$, we get
\begin{equation}
\begin{split}
I_b=&\tau \big(b^{T}\otimes I\big)\big({\bf 1}_{s}\otimes Mu^{n}\big)
-\tau \big(b^{T}\otimes I\big)\Big(I_{ 6s\times 6s}-\tau\big(A\otimes M\big)\Big)^{-1}\big({\bf 1}_{s}\otimes Mu^{n}\big)\\
=&\tau\big(b^{T}\otimes I\big)\bigg[I_{6s\times 6s}-\Big(I_{ 6s\times 6s}-\tau\big(A\otimes M\big)\Big)^{-1}\bigg]\big({\bf 1}_{s}\otimes Mu^{n}\big).
\end{split}
\end{equation}
By Lemma \ref{est operator},
we get
\begin{equation*}
\begin{split}
\|I_b\|_{\mathbb H}\leq& C\tau \left\|\bigg[I_{6s\times 6s}-\Big(I_{ 6s\times 6s}-\tau\big(A\otimes M\big)\Big)^{-1}\bigg]\big({\bf 1}_{s}\otimes Mu^{n}\big)\right\|_{{\mathbb H}^s}\\
\leq& C\tau^2\|(A\otimes M)\big({\bf 1}_{s}\otimes Mu^{n}\big)\|_{{\mathbb H}^s}\\
=& C\tau^2\|(A{\bf 1}_s)\otimes M^2 u^{n}\|_{{\mathbb H}^s}
\leq C\tau^2\|u^n\|_{{\mathcal D}(M^2)},
\end{split}
\end{equation*}
and then
\[
{\mathbb E}\|I_b\|^2_{\mathbb H}\leq C\tau^4{\mathbb E}\|u^n\|_{{\mathcal D}(M^2)}^2\leq C\tau^4.
\]
Therefore,
\[
{\mathbb E}\|I\|^2_{\mathbb H}\leq\tau{\mathbb E}\|e^n\|^2_{\mathbb H}+ C\tau^3,\quad
{\mathbb E}\langle e^n,I\rangle_{{\mathbb H}}={\mathbb E}\langle e^n,{\mathbb E}\big(I_a|{\mathcal F}_{t_n}\big)\rangle_{{\mathbb H}}+{\mathbb E}\langle e^n,I_b\rangle_{{\mathbb H}}\leq \tau{\mathbb E}\|e^n\|^2_{\mathbb H}+C\tau^3.
\]

{\em Step 2. The estimate of the term $\|II\|_{\mathbb H}$ and $\langle e^n,II\rangle_{{\mathbb H}}$.}
For term $II_a$, we recall that $\sum_{i=1}^{s}b_{i}=1$,
\begin{equation}
\begin{split}
II_a=&\int_{t_n}^{t_{n+1}}\Big(F(t,u(t))-\sum_{i=1}^{s}b_iF(t_n+c_i\tau,U_{ni})\Big){\rm d}t
=\tau\Big(F(t_n,u(t_n)-F(t_n,u^n)\Big)\\
&+\int_{t_n}^{t_{n+1}}\Big(F(t,u(t))-F(t_n,u(t_n)\Big){\rm d}t
+\tau\sum_{i=1}^{s}b_i\Big(F(t_n,u^n)-F(t_n+c_i\tau,U_{ni})\Big).
\end{split}
\end{equation}
From the globally Lipschitz property of $F$, we have
\begin{equation}
\begin{split}
\|II_a\|^2_{\mathbb H}\leq C\tau^2\|e^n\|^2_{\mathbb H}+C\tau^4+C\tau\int_{t_n}^{t_{n+1}}
\|u(t)-u(t_n)\|^2_{\mathbb H}{\rm d}t
+C\tau^2\|U_{ni}-u^n\|^2_{\mathbb H}.
\end{split}
\end{equation}
The assertion (i) of Proposition \ref{holder} and the estimate for $U_{ni}-u^n$ in Proposition \ref{holder_RK} lead  to
\[
{\mathbb E}\|II_a\|^2_{\mathbb H}\leq C\tau^2{\mathbb E}\|e^n\|^2_{\mathbb H}+C\tau^3.
\]
The estimate of ${\mathbb E}\|{\mathbb E}(II_a|{\mathcal F}_{t_n})\|^2_{\mathbb H}$ is technical. In fact, take the term $$\int_{t_n}^{t_{n+1}}\Big(F(u(t))-F(u(t_n)\Big){\rm d}t$$ in $II_a$ as an example, where we let $F$ do not depend on time $t$ explicitly for ease of presentation, since the dependence on time causes no substantial problems in the analysis but just leads to longer formulas.

Thanks to Taylor formula, we have
\begin{equation}
\begin{split}
\int_{t_n}^{t_{n+1}}\Big(F(u(t))-F(u(t_n)\Big){\rm d}t
=&\int_{t_n}^{t_{n+1}}F^{\prime}(u(t_n))\big(u(t)-u(t_n)\big){\rm d}t\\
&+\frac12\int_{t_n}^{t_{n+1}} F^{\prime\prime}(u_{\theta})\Big(u(t)-u(t_{n}),~u(t)-u(t_n)\Big)  {\rm d}t,
\end{split}
\end{equation}
where $u_{\theta}$ is some point between $u(t_n)$ and $u(t)$.
The estimate of the second term on the above equation is based on the assertion (i) of Proposition \ref{holder}, which gives order $O(\tau^4)$ in mean-square sense. For the first term, we apply conditional expectation first,
\begin{equation}\label{4.43}
\begin{split}
{\mathbb E}\left( \int_{t_n}^{t_{n+1}}F^{\prime}(u(t_n))\big(u(t)-u(t_n)\big){\rm d}t\bigg|{\mathcal F}_{t_n}\right)
=\int_{t_n}^{t_{n+1}}F^{\prime}(u(t_n)){\mathbb E}\Big(\big(u(t)-u(t_n)\big)\Big|{\mathcal F}_{t_n}\Big){\rm d}t,
\end{split}
\end{equation}
where the adaptedness of $\{u(t)\}_{t\in[0,T]}$ and the properties of conditional expectation are used. Then by  the assertion (ii) of Proposition \ref{holder}, we know that \eqref{4.43} gives order $O(\tau^4)$ in mean-square sense.

Hence, by this approach we can show that
\[{\mathbb E}\|{\mathbb E}(II_a|{\mathcal F}_{t_n})\|^2_{\mathbb H}\leq C\tau^2{\mathbb E}\|e^n\|^2_{\mathbb H} +C\tau^4.\]

For term $II_b$, we have
\begin{equation}
\begin{split}
II_b=&\tau^2 \big(b^{T}\otimes I\big)\big(I_{s\times s}\otimes M\big)\Big(I_{6s\times 6s}-\tau\big(A\otimes M\big)\Big)^{-1}\big(A\otimes I\big)F^{n}(U_n)\\
=&\tau^2\big(b^{T}\otimes I\big)\Big(I_{6s\times 6s}-\tau\big(A\otimes M\big)\Big)^{-1}\big(I_{s\times s}\otimes M\big)\big(A\otimes I\big)F^{n}(U_n)\\
=&\tau^2\big(b^{T}\otimes I\big)\Big(I_{6s\times 6s}-\tau\big(A\otimes M\big)\Big)^{-1}\big(A\otimes I\big)\big(I_{s\times s}\otimes M\big)F^{n}(U_n),
\end{split}
\end{equation}
hence from \eqref{5.18}
\begin{equation}
\begin{split}
\|II_b\|_{\mathbb H}\leq& C\tau^2\|\Big(I_{6s\times 6s}-\tau\big(A\otimes M\big)\Big)^{-1}\big(A\otimes I\big)\big(I_{s\times s}\otimes M\big)F^{n}\|_{{\mathbb H}^s}\\
\leq& C\tau^2\|\big(A\otimes I\big)\big(I_{s\times s}\otimes M\big)F^{n}\|_{{\mathbb H}^s}
\leq C\tau^2\max_{1\leq i\leq s}\|F(t_n+C_i\tau, U_{ni})\|_{{\mathcal D}(M)}\\
\leq& C\tau^2\big(1+\|U_n\|_{{\mathcal D}(M)^s}\big),
\end{split}
\end{equation}
which leads to ${\mathbb E}\|II_b\|^2_{\mathbb H}\leq C\tau^4$.

Therefore,
\[
{\mathbb E}\|II\|^2_{\mathbb H}\leq C\tau^2{\mathbb E}\|e^n\|^2_{\mathbb H}+C\tau^3,
\]
and
\[
{\mathbb E}\langle e^n,II\rangle_{{\mathbb H}}={\mathbb E}\langle e^n,{\mathbb E}\big(II_a|{\mathcal F}_{t_n}\big)\rangle_{{\mathbb H}}-{\mathbb E}\langle e^n,II_b\rangle_{{\mathbb H}}\leq C\tau{\mathbb E}\|e^n\|^2_{\mathbb H}+C\tau^3.
\]

{\em Step 3. The estimate of the term $\|III\|_{\mathbb H}$.}
For term $III_a$, we recall that $\sum_{i=1}^{s}\widetilde{b}_i=1$,
\begin{equation}
\begin{split}
III_a=\int_{t_n}^{t_{n+1}}\Big(B(t)-\sum_{i=1}^{s}\widetilde{b}_iB^{ni}\Big){\rm d}W(t)
=\int_{t_n}^{t_{n+1}}\sum_{i=1}^{s}\widetilde{b}_i\Big(B(t)-B^{ni}\Big){\rm d}W(t),
\end{split}
\end{equation}
hence
\[
{\mathbb E}\|III_a\|^2_{\mathbb H}=\int_{t_n}^{t_{n+1}}\left\|\sum_{i=1}^{s}\widetilde{b}_i\Big(B(t)-B^{ni}\Big)\right\|^2_{HS(U_0,{\mathbb H})}{\rm d}t\leq C\tau^3.
\]
For term $III_b$, similarly to $II_b$, we have
\begin{equation*}
\begin{split}
III_b=&\tau\big(b^{T}\otimes I\big)\big(I_{s\times s}\otimes M\big)\Big(I_{6s\times 6s}-\tau\big(A\otimes M\big)\Big)^{-1}\Big(\big(\widetilde{A}\otimes I\big)B^{n}\Delta W^{n+1}\Big)\\
=&\tau\big(b^{T}\otimes I\big)\Big(I_{6s\times 6s}-\tau\big(A\otimes M\big)\Big)^{-1}\big(I_{s\times s}\otimes M\big)\Big(\big(\widetilde{A}\otimes I\big)B^{n}\Delta W^{n+1}\Big)\\
=&\tau\big(b^{T}\otimes I\big)\Big(I_{6s\times 6s}-\tau\big(A\otimes M\big)\Big)^{-1}\big(\widetilde{A}\otimes I\big)\big(I_{s\times s}\otimes M\big)\big(B^{n}\Delta W^{n+1}\big),
\end{split}
\end{equation*}
hence from \eqref{5.18}
\begin{equation}
\begin{split}
{\mathbb E}\|III_b\|^2_{\mathbb H}\leq& C\tau^2\left\|\Big(I_{6s\times 6s}-\tau\big(A\otimes M\big)\Big)^{-1}\big(\widetilde{A}\otimes I\big)\big(I_{s\times s}\otimes M\big)\big(B^{n}\Delta W^{n+1}\big)\right\|^2_{{\mathbb H}^s}\\
\leq & C\tau^2\left\|\big(\widetilde{A}\otimes I\big)\big(I_{s\times s}\otimes M\big)\big(B^{n}\Delta W^{n+1}\big)\right\|^2_{{\mathbb H}^s}\\
\leq & C\tau^3.
\end{split}
\end{equation}
Therefore,
\[
{\mathbb E}\|III\|^2_{\mathbb H}\leq C\tau^3,\quad
{\mathbb E}\langle e^n,III\rangle_{{\mathbb H}}=0.
\]

{\em Step 4. Application of Gronwall's inequality.}
Combining all the estimates in Steps 1-3, we get
\[
{\mathbb E}\|e^{n+1}\|^2_{\mathbb H}\leq (1+C\tau){\mathbb E}\|e^{n}\|^2_{\mathbb H}+C\tau^3,
\]
which by Growall's inequality leads to
\[
\sup_{0\leq n\leq N}\Big({\mathbb E}\|e^{n}\|^2_{\mathbb H}\Big)^{\frac12}\leq C\tau.
\]
The above result is stated in the following theorem.
\begin{theorem}\label{estimate_e_k_RK}
In addition to the  conditions of Proposition \ref{prop} with $k=2$, let $\sum_{i=1}^{s}b_i=\sum_{i=1}^{s}\widetilde{b}_i\equiv 1$.  we have for the discrete solution of stochastic Runge-Kutta method \eqref{RK method_1}-\eqref{RK method_2},
	\begin{equation}
	\begin{split}
	\max_{1\leq n\leq N}\big(\mathbb{E}\|u(t_n)-u^n\|_{\mathbb H}^2\big)^{\frac12}\leq C\tau,
	\end{split}
	\end{equation}
	where the positive constant $C$ depends on the Lipschitz coefficients of $F$ and $B$, $T$, $\|u_0\|_{L^2(\Omega;{\mathcal D}(M^2))}$ and $\sup_{t\in[0,T]}\|B(t)\|_{HS(U,{\mathcal D}(M^2))}$, but independent of $\tau$ and $n$.
\end{theorem}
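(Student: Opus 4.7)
The plan is to follow the classical one-step error analysis: derive a recursion for $e^n := u(t_n)-u^n$, decompose its local truncation error into a Maxwell-operator part, a drift part and a noise part, bound each piece in mean square using the regularity/H\"older results already established, and close the estimate with discrete Gronwall. Concretely, I would compare the strong-form increment \eqref{strong solution} with the equivalent one-step formula \eqref{RK} obtained by eliminating the internal stages via $(I_{6s\times 6s}-\tau(A\otimes M))^{-1}$. This gives
\[
e^{n+1}=e^n+I+II_a-II_b+III_a-III_b,
\]
and after squaring and applying Young's inequality the task reduces to controlling $\mathbb{E}\|I\|_{\mathbb H}^2$, $\mathbb{E}\|II\|_{\mathbb H}^2$, $\mathbb{E}\|III\|_{\mathbb H}^2$ and the cross terms $\mathbb{E}\langle e^n,I\rangle_{\mathbb H}$, $\mathbb{E}\langle e^n,II\rangle_{\mathbb H}$; the cross term with $III_a$ vanishes by the $\{\mathcal{F}_{t_n}\}$-adaptedness of $e^n$ and the martingale property of the It\^o integral, while the $III_b$-contribution is handled directly.

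For the Maxwell piece I would further split $I=I_a+\tau Me^n+I_b$. Here $I_a=\int_{t_n}^{t_{n+1}}(Mu(t)-Mu(t_n))\,dt$ is strong order $\tau^{3/2}$ and weak order $\tau^2$ by Proposition \ref{holder} applied in $\mathcal{D}(M)$ (which requires $k=2$); the term $\tau Me^n$ has zero inner product with $e^n$ since $\langle v,Mv\rangle_{\mathbb H}=0$ and its norm is absorbed using Proposition \ref{regularity} and Proposition \ref{prop}; and the residual $I_b$ is rewritten by commuting $b^T\otimes M=(b^T\otimes I)(I_s\otimes M)$ and $(I_s\otimes M)(A\otimes M)=(A\otimes M)(I_s\otimes M)$ into the form $\tau(b^T\otimes I)[I_{6s\times 6s}-(I_{6s\times 6s}-\tau(A\otimes M))^{-1}](\mathbf{1}_s\otimes Mu^n)$, so that Lemma \ref{est operator}(ii) produces an extra factor $\tau$ and Proposition \ref{prop} with $k=2$ controls the resulting $M^2u^n$. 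For the drift $II$, I would peel off $\tau(F(t_n,u(t_n))-F(t_n,u^n))$ (which gives the Gronwall-friendly $\tau^2\|e^n\|_{\mathbb H}^2$), Taylor-expand $\int_{t_n}^{t_{n+1}}(F(t,u(t))-F(t_n,u(t_n)))\,dt$ around $u(t_n)$ so that the linear term is weak order $\tau^2$ by the mean-estimate of Proposition \ref{holder} after conditioning on $\mathcal{F}_{t_n}$, and use the $U_{ni}-u^n$ increment bound of Proposition \ref{holder_RK}; $II_b$ is treated exactly as $I_b$. The noise piece $III_a$ is $\mathcal{O}(\tau^3)$ in mean square by It\^o isometry together with H\"older continuity of $B$ and the consistency condition $\sum_i\widetilde b_i=1$, while $III_b$ again requires the commutation trick together with Lemma \ref{est operator}(i) and the standing assumption $B(t)\in HS(U_0,\mathcal{D}(M))$.

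The main obstacle is extracting the genuine $\tau$-gain hidden in the Runge-Kutta correction terms $I_b$, $II_b$, $III_b$: because $M$ is unbounded and sits inside the resolvent $(I_{6s\times 6s}-\tau(A\otimes M))^{-1}$, naive operator-norm bounds would only give $\tau^{1/2}$. The remedy, enabled by the coercivity of $A$, is to commute $I_s\otimes M$ past the resolvent onto $u^n$, $F^n$ or $B^n\Delta W^{n+1}$, where Proposition \ref{prop} and Assumption \ref{assum_B} with $\beta=k+1$ provide the required boundedness, and then exploit Lemma \ref{est operator}(ii) to trade one power of $M$ for one power of $\tau$. A secondary subtlety is the weak-order estimate of the drift, where one must take conditional expectation \emph{before} applying H\"older continuity so as to use the mean-estimate part of Proposition \ref{holder} rather than the strong one. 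Once all contributions are collected, one obtains
\[
\mathbb{E}\|e^{n+1}\|_{\mathbb H}^2\le (1+C\tau)\,\mathbb{E}\|e^n\|_{\mathbb H}^2+C\tau^3,
\]
and discrete Gronwall with $e^0=0$ yields $\max_{0\le n\le N}(\mathbb{E}\|e^n\|_{\mathbb H}^2)^{1/2}\le C\tau$, which is precisely the claim.
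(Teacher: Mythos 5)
Your proposal is correct and follows essentially the same route as the paper's own argument: the identical decomposition $e^{n+1}=e^{n}+I+II_{a}-II_{b}+III_{a}-III_{b}$, the same splitting $I=I_{a}+\tau Me^{n}+I_{b}$, the same commutation of $I_{s}\otimes M$ past the resolvent combined with Lemma \ref{est operator} to gain the extra power of $\tau$ in $I_b$, $II_b$, $III_b$, the same conditioning-before-H\"older trick for the weak-order drift estimate, and the same Gronwall closure. No gaps to report.
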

We observe that the Butcher Tableaux of the implicit Euler method and the midpoint method satisfy algebraic stability and the coercivity condition, therefore the mean-square convergence order of these two examples is of one,
\begin{corollary}
	Under the same assumptions of Theorem \ref{estimate_e_k_RK}. For implicit Euler method, or for midpoint method we have
	\begin{equation}
	\begin{split}
	\max_{1\leq k\leq N}\big(\mathbb{E}\|u(t_k)-u^k\|_{\mathbb H}^2\big)^{\frac12}\leq C\tau,
	\end{split}
	\end{equation}
	where the positive constant $C$ depends on the Lipschitz coefficients of $F$ and $B$, $T$, $\|u_0\|_{L^2(\Omega;{\mathcal D}(M^2))}$ and $\sup_{t\in[0,T]}\|B(t)\|_{HS(U,{\mathcal D}(M^2))}$, but independent of $\tau$ and $k$.
\end{corollary}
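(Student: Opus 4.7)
The plan is to set up a one-step error recursion by subtracting the Runge--Kutta update \eqref{RK} (obtained by eliminating the internal stages through the resolvent of $A\otimes M$) from the strong form \eqref{strong solution} of the exact solution, and then to estimate each residual piece in mean square. Writing $e^n=u(t_n)-u^n$, I would decompose the residual as $e^{n+1}-e^n = I+II_a-II_b+III_a-III_b$, where $I$ collects the linear $M$-contributions, $II_a,II_b$ the drift contributions, and $III_a,III_b$ the stochastic contributions; the subscripts $b$ denote the implicit resolvent corrections proportional to $\tau(b^T\otimes M)(I-\tau(A\otimes M))^{-1}$. Squaring and using Young's inequality to absorb the cross term $2\langle I,II\rangle_{\mathbb H}$ with inverse weight $1/\tau$ (so that the bound on $\|II\|_{\mathbb H}^2$ later yields an $O(\tau)$ contribution after absorption) gives the working inequality
\begin{equation*}
\|e^{n+1}\|_{\mathbb H}^2 \le (1+\tau)\|e^n\|_{\mathbb H}^2 + 3\|I\|_{\mathbb H}^2 + 2\langle e^n,I\rangle_{\mathbb H} + \bigl(3+C/\tau\bigr)\|II\|_{\mathbb H}^2 + 3\|III\|_{\mathbb H}^2 + 2\langle e^n,III\rangle_{\mathbb H},
\end{equation*}
after noting that $\langle I,III\rangle_{\mathbb H}$ and $\langle II,III\rangle_{\mathbb H}$ are absorbed in the same manner.

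Next I would bound each right-hand side piece by $C\tau^3+C\tau\,\mathbb E\|e^n\|_{\mathbb H}^2$. For the linear term $I$, I would split it as $I_a+\tau Me^n+I_b$: the time increment $I_a=\int_{t_n}^{t_{n+1}}(Mu(t)-Mu(t_n))\,dt$ is controlled by the H\"older regularity of $u$ in $\mathcal D(M)$ from Proposition \ref{holder}; the middle piece $\tau Me^n$ has zero inner product with $e^n$ by skew-adjointness of $M$ and only $O(\tau^3)$ squared norm coming from the $\mathcal D(M^2)$-regularity of both $u(t_n)$ and $u^n$; and the resolvent remainder $I_b$ is dispatched by assertion (ii) of Lemma \ref{est operator}, which trades one factor of $\tau$ for a factor of $M$, yielding an $O(\tau^2)\|u^n\|_{\mathcal D(M^2)}$ bound. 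This is precisely where the stringent regularity hypothesis $k=2$ in Proposition \ref{prop} becomes necessary. The terms $II_b$ and $III_b$ are handled by the same mechanism, using assertion (i) of Lemma \ref{est operator} together with the $\mathcal D(M)$-bounds on $F$ and $B$ furnished by Assumptions \ref{assum_F}--\ref{assum_B} and Proposition \ref{prop}. The martingale piece $III_a$ falls to It\^o isometry and the H\"older continuity of $B$, and its inner product with $e^n$ vanishes by adaptedness of $e^n$ to $\mathcal F_{t_n}$.

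The delicate step, which I expect to be the main obstacle, is estimating $\mathbb E\langle e^n,II_a\rangle_{\mathbb H}$ to order $\tau^3$, because a direct bound using the $\tau^{p/2}$ H\"older bound of $u$ only yields $O(\tau^{3/2})$ and would downgrade the global rate to $1/2$, as in \cite{CHJ2018}. To upgrade to order one I would Taylor-expand $F(u(t))-F(u(t_n))$ to first order about $u(t_n)$, pass to the conditional expectation on $\mathcal F_{t_n}$, exploit adaptedness to pull $F'(u(t_n))$ outside the conditional expectation, and then invoke the mean estimate $\|\mathbb E(u(t)-u(t_n)\mid \mathcal F_{t_n})\|_{\mathcal D(M^{k-1})}\le C|t-t_n|$ from Proposition \ref{holder} to supply the missing half power of $\tau$. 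The second-order Taylor remainder is quadratic in $u(t)-u(t_n)$ and contributes $O(\tau^4)$ in mean square directly. Putting everything together yields $\mathbb E\|e^{n+1}\|_{\mathbb H}^2\le (1+C\tau)\mathbb E\|e^n\|_{\mathbb H}^2+C\tau^3$, and a discrete Gronwall inequality completes the proof with the stated bound $\max_n\bigl(\mathbb E\|e^n\|_{\mathbb H}^2\bigr)^{1/2}\le C\tau$.
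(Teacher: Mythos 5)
Your argument is correct and reproduces, essentially verbatim, the paper's proof of Theorem \ref{estimate_e_k_RK}: the same decomposition $e^{n+1}-e^n=I+II_a-II_b+III_a-III_b$, the same use of Lemma \ref{est operator} for the resolvent corrections $I_b$, $II_b$, $III_b$, and the same Taylor-plus-conditional-expectation device for upgrading $\mathbb{E}\langle e^n,II_a\rangle_{\mathbb H}$ to order $\tau^3$. The only content specific to the corollary --- verifying that the implicit Euler and midpoint tableaux are algebraically stable ($m_{11}=1$ and $m_{11}=0$, respectively), coercive ($A$ is an invertible positive scalar in both cases), and satisfy $\sum_{i}b_i=\sum_{i}\widetilde{b}_i=1$, so that Theorem \ref{estimate_e_k_RK} applies --- is left implicit in your write-up, but this is a one-line check that the paper itself records in its Examples.
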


\bibliographystyle{plain}
\bibliography{maxwell}






\end{document}